\definecolor{aleacolor}{rgb}{0.16,0.59,0.78}
\theoremstyle{plain}
\newtheorem{theorem}{Theorem}[section]                                          
\newtheorem{lemma}[theorem]{Lemma}
\newtheorem{corollary}[theorem]{Corollary}
\theoremstyle{definition}
\newtheorem{definition}[theorem]{Definition}
\theoremstyle{remark}
\newtheorem{remark}[theorem]{Remark}
\newtheorem{example}[theorem]{Example}
\makeatletter \@addtoreset{equation}{section} \makeatother
\newcommand{\R}{\mathbb{R}}
\newcommand{\E}{\mathbb{E}}
\newcommand{\Indi}[1]{\mathbbm{1}_{#1}}
\begin{document}

\title[Short Title]{Rates on Yaglom's limit for Galton-Watson  processes  \\[3mm] in a varying environment}

\author{Natalia Cardona-Tob\'on}
\address{Institute for Mathematical Stochastics, Georg-August-Universität Göttingen. Goldschmidtstr. 7, C.P.  37077 Göttingen, Germany}
\email{natalia.cardonatobon@uni-goettingen.de} 
\urladdr{\href{https://sites.google.com/view/natalia-cardona-tobon/home-page}{https://sites.google.com/$\sim$natalia}} 

\author{Arturo Jaramillo}
\address{Centro de Investigación en Matemáticas. Calle Jalisco s/n. C.P. 36240, Guanajuato, México}
\email{jagil@cimat.mx} 
\urladdr{\href{https://www.cimat.mx/~jagil/app/dist/Arturo_Jaramillo_Gil.html}{https://www.cimat.mx/$\sim$jagil}}

\author{Sandra Palau}
\address{Universidad Nacional Autónoma de México. Circuito Escolar S/N, Ciudad Universitaria, C.P. 04510, Ciudad de México, México}
\email{sandra@sigma.iimas.unam.mx} 
\urladdr{\href{http://sigma.iimas.unam.mx/sandra/}{http://sigma.iimas.unam.mx/$\sim$sandra}}

\thanks{Arturo Jaramillo was supported by CONACYT, Mexico grant CB-2017-2018-A1-S-9764.}
\thanks{Sandra Palau was supported by grant  PAPIIT-UNAM no. IN103924.}
\subjclass[2010]{60J80; 60F05; 60K37} 
\keywords{Critical Galton–Watson processes, varying environment, Yaglom’s limit, rate of convergence, Stein’s method, equilibrium and size-biased distributions.}

\begin{abstract}
 	A Galton-Watson process in a varying environment is a discrete time branching process where the offspring distributions vary among generations. It is known that in the critical case, these processes have a Yaglom limit, that is, a suitable normalization of the process conditioned on non-extinction converges in distribution to a standard exponential random variable. In this manuscript, we provide the rate of convergence of the Yaglom limit with respect to the Wasserstein metric.

\end{abstract}

\maketitle

\section{Introduction and  main results}\label{sec:intro}
The goal of this manuscript is to establish a quantitative comparison between the law of a suitably re-scaled Galton-Watson process with time dependence offspring distributions, conditioned on being positive, against a standard exponential distribution, measured with respect to the Wasserstein distance. 
The precise setting of this problem lies in the realm of Galton-Watson processes in a varying environment. 
Its central concept consists of a system of particles 
which independently produce descendants and the 
offspring distributions vary among generations. 

A Galton-Watson processes in a varying environment (GWVE for short)  is constructed in the following way: Consider a collection $Q=\{q_n:  n\geq 1\}$ of probability measures defined over a given space $(\Omega, \mathcal{F},\mathbb{P})$ and supported on $\mathbb{N}_0:=\{0,1,2,\dots \}$. The sequence $Q$ is called an environment.  As it is customary when studying discrete measures, we will interchangeably use the notation $q_n[\{k\}]$ and $q_n[k]$ for $k\in\mathbb{N}_0$. A  {\it Galton-Watson process} $Z = \{Z_n: n\geq 0\}$ in the {\it environment} $Q$ is defined recursively by 
\begin{align}\label{eq:Zndef}
	Z_0=1\quad \mbox{ and } \quad Z_n
	=\underset{i=1}{\overset{ Z_{n-1}}{\sum}}\chi_{i}^{(n)}, \qquad n\geq 1,
\end{align}
where  $\{\chi_{i}^{(n)}: i,n\geq 1\}$ is a sequence of independent random variables over $(\Omega, \mathcal{F},\mathbb{P})$, satisfying
$$\mathbb{P}[\chi_{i}^{(n)}=k]=q_n[k], \qquad \quad k\in \mathbb{N}_{0},\ i,\ n\geq 1.$$ 
The quantity $\chi_{i}^{(n)}$  denotes the offspring of the $i$-th individual in the $(n-1)$-th generation. The process $Z_n$ represents the total population size at generation $n$. It satisfies the Markov property with respect to its naturally induced filtration. Without further mention we
always require that $0<\mathbb{E}[\chi_{i}^{(n)}]<
\infty$  for all  $n\geq 1$.
The classical Galton-Watson process corresponds to  $q_{n}\equiv q$ for all $n\geq 1$, and in the sequel, will be referred to as the {\it constant environment regime}. 

Let us recall what is known in the constant environment regime. 
In order to exclude trivial cases, we assume in the following that 
\begin{equation}\label{eq:q0q1}
	q[0] + q[1] < 1 \qquad \text{and}\qquad q[k]\not =1 \qquad \text{for all}\qquad k \in \mathbb{N}_0,
\end{equation}
meaning that the probability of having offspring is strictly positive and that $q$ is not deterministic.
In this case, the mean  $\mu:=\E[\chi^{(1)}_1]$ is a good parameter in order to study the asymptotic behaviour of the process. More precisely,  the process has extinction almost surely if and only if $\mu\leq 1$. When $\mu>1$, (\textit{supercritical case}), in the event of survival, the process $Z_n$ goes to $\infty$ as $n\rightarrow \infty$. When $\mu<1$, (\textit{subcritical case}),  the process dies out with probability one and the distribution of $\{Z_n\mid Z_n>0\}$ converges to a proper distribution. 
The \textit{critical case}, when $\mu=1$, is in a sense the most interesting case because $Z_n\rightarrow 0$ as $n\rightarrow \infty$ but the conditional process $\{Z_n\mid Z_n>0\}$  is diverging to $\infty$. 
Then,  a normalization and a finite second moment of the offspring distribution is needed to make that the 
conditioned process converge to a non-degenerate limit. If $\mu=1$, $\sigma^2:=var(Z_1)<\infty$ and $\E[(Z_1)^3]<\infty$ then as $n\rightarrow \infty$,  the survival probability decays as 
\begin{equation}\label{eq: kolmogorov estimate}
	\mathbb{P}[Z_n>0]= \frac{2}{\sigma^2n}+O\left( \frac{{\log(n)}}{n^2}\right).
\end{equation}
Kolmogorov (1938) first proved   $\mathbb{P}[Z_n>0]\sim  2(\sigma^2n)^{-1}$ without giving explicitly the exact decay. {The proof with the explicit error bound is provided in \cite[Theorem 3]{Nagaev}; see also \cite[Theorem 4]{Gaviev1969} and \cite[Display between (5) and (6), page 2437]{vatutin1987}. Note that in the latter reference, there is a typographical error in the error bound, which was carried over into the published version of this article in ALEA. We would also like to point out that the original references are in Russian, but we have cited them here in English for convenience. Now,}
%see \cite[Display between (5) and (6), page 2437]{vatutin1987} for a proof with the explicit error.  Then, 
Yaglom  \cite{yaglom1947certain} proved   under a third moment assumption, that  $2Z_n(\sigma^2n)^{-1}$ conditioned on the event $\{Z_n>0\}$ converges in distribution to a standard exponential variable as $n\rightarrow \infty$.
\begin{theorem}[\textbf{Yaglom's limit for GW}]\label{thm:Yaglom}
	Let $\{Z_n:n\geq 0\}$ be a critical Galton-Watson process with $\sigma^2<\infty$. Then, 
	\begin{align*}
		\left\{\left.\frac{2}{\sigma^2n}Z_n\ \right| Z_{n}>0\right\}
		&\stackrel{(d)}{\longrightarrow}\mathbf{e}, \qquad \mbox{  as }\ n\rightarrow \infty,
	\end{align*}
	where $\mathbf{e}$ denotes a standard exponential random variable and ``$\stackrel{(d)}{\longrightarrow}$" 
	means convergence in distribution.
\end{theorem}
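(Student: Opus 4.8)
The plan is to prove Yaglom's classical theorem via the generating function method, which is the route that will generalize (with more bookkeeping) to the varying environment. Let $f(s) = \E[s^{Z_1}]$ denote the offspring generating function and $f_n = f\circ\cdots\circ f$ its $n$-fold iterate, so that $f_n(s) = \E[s^{Z_n}]$. The key object is $g_n(s) := \frac{1}{1-f_n(s)}$. A direct computation using $f'(1)=1$ and $f''(1)=\sigma^2$ shows that near $s=1$ one has $\frac{1}{1-f(s)} = \frac{1}{1-s} + \frac{\sigma^2}{2} + o(1)$, and iterating this telescoping relation gives $g_n(s) = \frac{1}{1-s} + \frac{\sigma^2 n}{2}(1+o(1))$ as $n\to\infty$, uniformly for $s$ in compact subsets of $[0,1)$. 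Specializing at $s=0$ recovers Kolmogorov's estimate $\mathbb{P}[Z_n>0] = 1 - f_n(0) \sim \frac{2}{\sigma^2 n}$.

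Next I would compute the Laplace transform of the conditioned and rescaled variable. For $\lambda>0$, set $s = e^{-2\lambda/(\sigma^2 n)}$, so that $1-s \sim \frac{2\lambda}{\sigma^2 n}$. Then
\begin{align*}
\E\!\left[\left.\exp\!\left(-\lambda\,\frac{2Z_n}{\sigma^2 n}\right)\,\right|\,Z_n>0\right]
&= \frac{\E[s^{Z_n}] - \mathbb{P}[Z_n=0]}{\mathbb{P}[Z_n>0]}
= 1 - \frac{1-f_n(s)}{1-f_n(0)}
= 1 - \frac{g_n(0)}{g_n(s)}.
\end{align*}
Plugging in the asymptotics $g_n(s) = \frac{\sigma^2 n}{2\lambda}(1+o(1)) + \frac{\sigma^2 n}{2}(1+o(1))$ and $g_n(0) = \frac{\sigma^2 n}{2}(1+o(1))$, the ratio $g_n(0)/g_n(s)$ converges to $\frac{1}{1 + 1/\lambda} = \frac{\lambda}{1+\lambda}$, hence the conditional Laplace transform converges to $1 - \frac{\lambda}{1+\lambda} = \frac{1}{1+\lambda}$, which is exactly the Laplace transform of a standard exponential random variable $\mathbf{e}$. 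By the continuity theorem for Laplace transforms on $[0,\infty)$, this yields convergence in distribution.

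The main obstacle is establishing the expansion $\frac{1}{1-f(s)} = \frac{1}{1-s} + \frac{\sigma^2}{2} + \varepsilon(s)$ with $\varepsilon(s)\to 0$ as $s\to 1^-$ using only $\sigma^2<\infty$, and then controlling the accumulated error after $n$ iterations. The pointwise expansion at $s=1$ follows from a second-order Taylor/Landau estimate, but to conclude $g_n(s) = \frac{1}{1-s} + \frac{\sigma^2 n}{2} + o(n)$ one must show the error terms $\varepsilon(f_k(s))$ are summable after dividing by $n$; this uses that $f_k(s)\to 1$ so that $\varepsilon(f_k(s))\to 0$, combined with a Cesàro argument. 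Under the stronger third-moment hypothesis the error $\varepsilon(s)$ is $O(1-s)$ and the control is immediate; under only $\sigma^2<\infty$ a slightly more careful truncation argument is needed, but the structure of the proof is unchanged. This generating-function scheme is precisely what the quantitative results of this paper refine and transport to the varying-environment setting, where $f_n$ is replaced by the non-stationary composition $f_1\circ\cdots\circ f_n$ and Stein's method supplies the explicit Wasserstein rate.
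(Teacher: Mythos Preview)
Your generating-function argument is correct and is essentially Yaglom's original proof via Laplace transforms; the telescoping identity for $\frac{1}{1-f_n(s)}$ and the Ces\`aro control of the error terms under only $\sigma^2<\infty$ are standard and you have identified the right subtlety. Note, however, that the paper does \emph{not} give its own proof of this theorem: it is stated as a classical result and the paper merely lists the known approaches (Yaglom's Laplace-transform proof, the size-biased/spine proof of Lyons--Pemantle--Peres, Geiger's distributional-equation proof, and the two-spine proof of Ren--Song--Sun).

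Your closing paragraph, though, misidentifies the method of the paper. The quantitative results here are \emph{not} obtained by refining the generating-function scheme you describe. The paper follows the Pek\"oz--R\"ollin route: Stein's method for the exponential distribution via the equilibrium distribution, coupled with the one-spine (size-biased) tree construction of Lyons--Pemantle--Peres. The Wasserstein bound is reduced to controlling $\E[|Y_n - U\dot{Y}_n|]$, and this is done by embedding both variables into the size-biased tree and estimating contributions from each spine generation. Generating functions (via Kersting's shape function) enter only as an auxiliary tool in Step~IV, to bound the survival probabilities $\mathbb{P}[Z_{n-j}^{Q_j}>0]$; they are not the engine of the argument. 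So your proof is a valid route to the qualitative statement, but it is the size-biased/Stein approach---not the Laplace-transform approach---that the paper transports to the varying environment and sharpens into rates.
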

This result has several proofs. Yaglom proved it  by using the Laplace transform of the process.
Lyons et al. \cite{MR1349164} gave a %probabilistic 
proof using a characterisation of the exponential distribution via its size-biased distribution.  
They showed that the size-biased process can be related with  a  Galton-Watson tree with one distinguished genealogical line (the so-called \textit{spine}). 
Later on, Geiger characterised the exponential random variable by a distributional equation and  the author presented another proof of Yaglom's limit based on that equation (see  \cite{geiger1999elementary,geiger2000new}).  Additionally, \cite{ren20172}, developed  another proof using a two-spine decomposition technique, where the associated Galton-Watson tree has two distinguished genealogical lines.

More than 60 years after Yaglom proved Theorem \ref{thm:Yaglom} in the constant environment, \cite{pekoz2011} obtained  explicit error bounds. They used the Wasserstein distance to compare the distributions. For two measures $\mu$ and $\nu$ we define the \textit{Wasserstein distance}, $d_W$, as follows
\begin{align*}
	d_W  (\mu,\nu)
	&:=\sup_{f\in \mathcal{F}}\left|\int_{\R}f(x)\mu(\mathrm{d}x)-\int_{\R}f(x)\nu(\mathrm{d}x)\right|,
\end{align*}
where $\mathcal{F}=\{f:\R\rightarrow\R\ : f\mbox{ is Lipschitz and } \|f'\|\leq 1 \}$, and $f'$ is 
the derivative of $f$. For random variables $X$ and $Y$ 
with respective laws $\mu$ and $\nu$, we abuse notation and write $d_W(X,Y)$ in place of $d_W(\mu, \nu)$. Peköz and Röllin \cite{pekoz2011} studied the rate of convergence by using Stein's method, a collection of probabilistic techniques tailored for estimating distances by means of differential operators. The manuscript \cite{pekoz2011}, building upon previous work by \cite{MR1401468}, presents a sharp Stein's method machinery for exponential approximations with a perspective of equilibrium distribution. Subsequently, in conjunction with the relation of the size-biased process 
and the Galton-Watson process with one spine of  \cite{MR1349164}, the authors implement their methodology in the framework of branching processes and establish the following quantitative version of Yaglom's theorem. In \cite[Theorem 3.3]{pekoz2011}, the authors stated:
%the authors showed: 

\textit { If $\{Z_n:n\geq 0\}$ is a critical Galton-Watson process with $\sigma^2<\infty$ and $\E[(Z_1)^3]<\infty$. Then, there exists a constant $C>0$ such that 
	\begin{equation}\label{eq:pekozthm}
		d_W\left(\left\{\left.\frac{2}{\sigma^2n}Z_n\ \right| Z_{n}>0\right\}, \mathbf{e}\right)
		\leq  C\ \frac{\log(n)}{n}.
\end{equation}}

Unfortunately, the proof of \eqref{eq:pekozthm} in  \cite{pekoz2011} has a minor mistake: they used \cite[Theorem 2.1]{pekoz2011},
%It is important to remark that in their proof, they used \cite[Theorem 2.1]{pekoz2011}
which is only valid when the random variable of interest has mean one and thus it is not applicable to $\{2(\sigma^2n)^{-1}Z_n\mid Z_n>0\}$. {However, the bound still holds.}
This issue {in the proof} can be easily corrected; we propose two options for doing so. One is to change the normalization in order to have a mean one variable.  The second option is to extend \cite[Theorem 2.1]{pekoz2011} to random variables with finite mean $m$. We develop such extension in Theorem \ref{theo:pekoz}, where the distance $|m-1|$ becomes part of the bound.
%In other words, these two kind of normalizations yield different bounds which we state in the following theorem. Its proof is postponed to Section \ref{proof constant}. 
Essentially the first bound follows from Theorem \ref{theo:pekoz} and \cite[Theorem 3.3]{pekoz2011}, together with  the fact that the mean of a critical Galton-Watson process satisfies 
\begin{equation}\label{eq:kolmesti}
	\E\left[\left.\frac{2}{\sigma^2n}Z_n\ \right| Z_{n}>0\right]= { \frac{2}{\sigma^2n}\frac{1}{\mathbb{P}[Z_n>0]}=\frac{1}{ 1+O\left( \frac{\log(n)}{n}\right)}=1+O\left( \frac{\log(n)}{n}\right),}
\end{equation}
which is implied by equation \eqref{eq: kolmogorov estimate}. On the other hand, the second bound is obtained if, instead, we use the mean one variable and make a slight modification of the proof of \cite[Theorem 3.3]{pekoz2011}. {The two bounds coincide, as stated in the following theorem, with the proof deferred to Section~\ref{proof constant}.}

\begin{theorem}\label{thm:PekozRoellin}
	Let $\{Z_n:n\geq 0\}$ be  a critical Galton-Watson process with $\sigma^2<\infty$ and $\E[(Z_1)^3]<\infty$. Then, there exists a constant $C>0$ which is independent of $n$ such that 
	\begin{align*}
		d_W\left(\left\{\left.\frac{2}{\sigma^2n}Z_n\ \right| Z_{n}>0\right\}, \mathbf{e}\right)
		&\leq  C\ \frac{{\log(n)}}{n},
	\end{align*}
	and
	\begin{align}\label{eq:dPeRo1}
		d_W\left(\left\{\left.\ \mathbb{P}[Z_n>0]\  Z_n\ \right| Z_{n}>0\right\}, \mathbf{e}\right)
		&\leq  C\ \frac{\log(n)}{n}.
	\end{align}
\end{theorem}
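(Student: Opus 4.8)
The plan is to derive both bounds as consequences of the quantitative statement \eqref{eq:pekozthm} from \cite[Theorem 3.3]{pekoz2011} (whose validity is not in question, only the internal argument), combined with the corrected tool Theorem~\ref{theo:pekoz} and the sharp Kolmogorov estimate \eqref{eq: kolmogorov estimate}. Write $W_n := \tfrac{2}{\sigma^2 n} Z_n$ conditioned on $\{Z_n>0\}$, and $m_n := \E[W_n \mid Z_n>0] = \tfrac{2}{\sigma^2 n}\,\mathbb{P}[Z_n>0]^{-1}$, so that by \eqref{eq: kolmogorov estimate} one has $m_n = 1 + O(\log(n)/n)$, i.e. \eqref{eq:kolmesti}. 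Note that $V_n := \mathbb{P}[Z_n>0]\,Z_n$ conditioned on $\{Z_n>0\}$ is precisely the mean-one rescaling, since $V_n = W_n/m_n$ and $\E[V_n\mid Z_n>0]=1$.

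For the \textbf{first bound}, I would feed $W_n$ into the extended Stein bound Theorem~\ref{theo:pekoz}: since $W_n$ has mean $m_n$ (not necessarily one), the theorem produces a bound in which the term $|m_n - 1|$ appears additively alongside the size-bias/equilibrium coupling terms that are already controlled in \cite{pekoz2011} at order $\log(n)/n$. By \eqref{eq:kolmesti}, $|m_n-1| = O(\log(n)/n)$, so this extra term is absorbed into the same order, yielding $d_W(W_n,\mathbf{e}) \le C\log(n)/n$.

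For the \textbf{second bound}, I would instead work directly with the mean-one variable $V_n$, repeating the Peköz--Röllin argument of \cite[Theorem 3.3]{pekoz2011} essentially verbatim but now legitimately invoking \cite[Theorem 2.1]{pekoz2011} (which requires mean one) rather than the misapplied version; the spine/size-biased coupling estimates are unchanged in order, giving $d_W(V_n,\mathbf{e}) \le C\log(n)/n$. Alternatively — and more cheaply — one can bypass re-running the proof and deduce the second bound from the first by the triangle inequality together with a Lipschitz-scaling estimate: for any Lipschitz $f$ with $\|f'\|\le 1$,
\begin{align*}
\bigl|\E[f(V_n)] - \E[f(W_n)]\bigr| = \bigl|\E[f(W_n/m_n)] - \E[f(W_n)]\bigr| \le \E|W_n/m_n - W_n| = |m_n^{-1}-1|\,\E[W_n\mid Z_n>0] = |1-m_n|,
\end{align*}
so $d_W(V_n, W_n) \le |1-m_n| = O(\log(n)/n)$, and hence $d_W(V_n,\mathbf{e}) \le d_W(V_n,W_n) + d_W(W_n,\mathbf{e}) = O(\log(n)/n)$. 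Either route shows the two bounds coincide up to the constant $C$.

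The \textbf{main obstacle} is the correct formulation and proof of Theorem~\ref{theo:pekoz} itself — the extension of \cite[Theorem 2.1]{pekoz2011} to random variables of arbitrary finite mean $m$, tracking how the mean-one normalization enters the Stein equation for the exponential and verifying that the discrepancy contributes exactly a $|m-1|$ term (and not something worse) to the Wasserstein bound; once that is in hand, the branching-process input is just the already-established size-biased spine coupling of \cite{MR1349164,pekoz2011} and the sharp survival asymptotics \eqref{eq: kolmogorov estimate}, both of which are quoted. A minor secondary point is checking that the third-moment hypothesis $\E[(Z_1)^3]<\infty$ is exactly what is needed to keep all coupling error terms at order $\log(n)/n$, matching the error term in \eqref{eq: kolmogorov estimate}.
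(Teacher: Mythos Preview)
Your proposal is correct and your primary routes for both bounds match the paper's proof exactly: for the first bound the paper applies Theorem~\ref{theo:pekoz} to $W=\tfrac{2}{\sigma^2 n}R_n^*$, combining the Peköz--Röllin coupling estimate $\E|W-W^e|\le C\log(n)/n$ with the Kolmogorov correction $|m_n-1|=O(\log(n)/n)$; for the second bound the paper re-normalizes to $W=\mathbb{P}[Z_n>0]R_n^*$, re-uses the same coupling claims to get $\E|W-W^e|\le C\log(n)/n$, and then legitimately invokes the mean-one Stein bound.

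Your alternative route for the second bound---deducing $d_W(V_n,\mathbf{e})$ from the first via the triangle inequality and the elementary Lipschitz-scaling estimate $d_W(V_n,W_n)\le|1-m_n|$---is not taken in the paper but is a valid and slightly more economical shortcut: it avoids re-checking the coupling bounds under the new normalization. What the paper's approach buys is that the second bound is established independently of Theorem~\ref{theo:pekoz} (it uses only the original mean-one result of \cite{pekoz2011}), whereas your alternative routes the second bound through the first and hence through the extended Theorem~\ref{theo:pekoz}. Either way the orders coincide. Note also that Theorem~\ref{theo:pekoz} is stated and proved in the paper (Section~\ref{sec:stein}), so what you flag as the ``main obstacle'' is already supplied.
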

This bound might be strictly suboptimal, as in some instances can be improved  by a factor of the form $1/\log(n)$. This is illustrated in Example \ref{ex:linear}, where the offspring has linear fractional distribution. For this instance,
\begin{align*}
	d_W\left(\left\{\left.\ \mathbb{P}[Z_n>0]\  Z_n\ \right| Z_{n}>0\right\}, \mathbf{e}\right)
	&\leq  \frac{4}{2+\sigma^2 n}< C\ \frac{\log(n)}{n},
\end{align*}
%for any $C>0$ and large enough $n$.
for a constant $C$ independent of $n$.

The above phenomenology suggests that a similar behaviour should hold for Galton-Watson processes in a varying environment. We require some knowledge on the long-term behaviour of these processes. Let  $Q=\{q_n: n\geq 1\}$ be an environment and for every $n\geq 1$, denote by $f_n$ the  generating function associated with the reproduction law $q_n$, i.e.
$$ f_n(s):= \sum_{k=0}^{\infty} s^k q_n[k], 
\qquad 0 \leq s\leq 1.$$ 

Let $
\{Z_{n}: n\geq 0\}$ be a Galton-Watson process in   environment $Q$ defined as in \eqref{eq:Zndef}. By a recursive application of the branching property, we deduce that the generating function of \ $Z_n$ \ can be written in terms of the $f_i$'s as 
\begin{equation}\label{laplace Z}
	\mathbb{E}\big[s^{Z_n}\big]=f_1\circ\cdots\circ f_n(s), 
	\qquad 0 \leq s\leq 1, \ n\geq 1,
\end{equation}
where $``\circ"$ denotes the composition of functions. This expression gives an explicit description of the law of $Z_n$ in terms of the reproduction laws $\{q_k: 1\leq k\leq n\}$.
It guarantees that we can obtain formulas for moments of $Z_n$ in a standard manner.  More precisely, let $\mu_0:=1$, and for any $n\geq 1$, define 
\begin{equation}\label{eq:mu}
	\mu_n:=f_1'(1)\cdots f_n'(1), \qquad \nu_n:= \frac{f_n''(1)}{f_n'(1)^2},\qquad \mbox{and} \qquad\rho_{0,n}:=\sum_{k=0}^{n-1}\frac{\nu_{k+1}}{\mu_{k}}.
\end{equation}
Then, by taking derivatives with respect to the variable $s$ and then evaluating the resulting function at $s=1$,  we deduce that the mean and the normalized second factorial moment of $Z_n$ satisfy
\begin{equation*}
	\mathbb{E}\big[Z_n\big]=\mu_n \qquad \mbox{and} \qquad \frac{\mathbb{E}\big[Z_n(Z_n-1)\big]}{\mathbb{E}\big[Z_n\big]^2}=\rho_{0,n}.
\end{equation*}
The interested reader is referred to the monograph of Kersting and Vatutin \cite{kersting2017discrete} for a detailed presentation of the formulas above.

Galton-Watson processes in a varying environment may behave different from the constant environment regime. 
%Former research on GWVE  was temporarily affected by the appearance of certain exotic properties, and one could get the impression that it is difficult to grasp some kind of
%generic behaviour of these processes.
During a certain period, research on GWVE was temporarily affected due to the emergence of families exhibiting exotic properties.
For example, they may possesses multiple rates of growth, as was detected by \cite{macphee1983galton}. Moreover, D’Souza \cite{d1994rates} constructed another example where the GWVE has an infinite number of rates of growth. We would like to present this example here and  refer the reader to Section 4 in \cite{d1994rates} for further details. Let $\{a_n: n\ge 0\}$ be a strictly increasing sequence such that $a_0>0$ and $\lim_{n\to \infty}a_n:= a_\infty<\infty$. Choose another sequence $\{b_n: n\ge 0\}$  satisfying $0< b_0 <a_0 <b_1 < a_1< \dots $
%Denote by  $b_\infty$ its limit, i.e. $\lim_{n\to \infty}b_n :=b_\infty$. 
%Then, we observe that $b_\infty= a_\infty$. 
and define 	\[\beta_n:= \sum_{i=1}^{\infty} \frac{a_i - a_{i-1}}{n^{b_i+1}}, \quad n\ge 1.\]
Now, consider $\{Z_n: n\geq 0\}$ a GWVE with generation functions given by
\[f_n(s)= \left(1-\frac{a_0}{n} -2 \beta_n\right)s + \left(\frac{a_0}{n} + \beta_n\right)s^2 + \sum_{i=1}^{\infty} \frac{a_i- a_{i-1}}{n^{b_i+1}}s^{n^{b_i}},\]
for $n \ge n_0$, where $n_0$ is large enough such that this defines a proper probability generating function, and $f_n(s)=s$ for $n< n_0$. According to  \cite[Theorem 6]{d1994rates}, for any $i\in \mathbb{N}$ the sequence $\{Z_n / n^i: n\ge 0\}$ converges to a finite, positive limit with non-zero probability.

%creating the impression that understanding a generic behavior for these processes was challenging.
The presence of these exotic properties gave the impression that understanding a generic behaviour for these processes was challenging. However,   Kersting \cite{kersting2020} established a
certain condition which excludes such exceptional phenomena: for every $\epsilon>0$, there is a finite constant $c_\epsilon$ such that for all  $n\ge 1$
\begin{equation}\label{cond:Kersting}\tag{$\star$}
	\mathbb{E}\left[\left(\chi^{(n)}\right)^2\mathbf{1}_{\{\chi^{(n)}>c_\epsilon(1+\mathbb{E}[\chi^{(n)}])\}}\right]\leq \epsilon \mathbb{E}\left[\left(\chi^{(n)}\right)^2\mathbf{1}_{\{\chi^{(n)}>2\}}\right]<\infty,
\end{equation} 
where $\chi^{(n)}\sim q_n$.  
We say that a GWVE is regular if it satisfies Condition  \eqref{cond:Kersting}. In what follows, we always consider regular GWVE.

Nonetheless, verifying directly the latter  condition  for many families of random variables can be cumbersome, so  instead as it is suggested in \cite{kersting2020} one may use the following mild third moment condition: there exists $ c>0$ such that    
\begin{equation}\label{cond:thirdmoment}\tag{$\star \star$}
	f_n'''(1)\leq c f_n''(1)(1+ f_n'(1)),  \mbox{ for any } n\geq 1.
\end{equation} 
According to  \cite[Proposition 2]{kersting2020},  it implies condition \eqref{cond:Kersting} and it is satisfied by the most common probability distributions, for instance binomial, geometric, hypergeometric, negative binomial  and Poisson distributions.

It turns out that under Condition \eqref{cond:Kersting}, the behaviour of a GWVE  is essentially dictated
by the sequences  $\{\mu_n: n\geq 0\}$ and $\{\rho_{0,n}: n\geq 0\}$. With them, a regular GWVE can be classified into four regimes. 
According with \cite[Theorem 1]{kersting2020}, a regular GWVE has almost sure extinction if and only if $\mu_n\rightarrow 0$ or $\rho_{0,n}\rightarrow \infty$ as $n\rightarrow \infty$. Moreover, under these conditions,  $\mathbb{E}\big[Z_n \mid   Z_n>0\big] \to \infty$ as $n\rightarrow \infty$ if and only if $\mu_n\rho_{0,n}\to \infty$ as $n\rightarrow \infty$, see \cite[Theorem 4]{kersting2020}.  Therefore, in comparison with   the constant environment regime, 
it is natural to give the following definitions.  A regular GWVE is \textit{critical} if and only if 
\begin{equation}\label{eq:critical}
	\rho_{0,n} \to \infty \qquad \text{ and } \qquad   \mu_n \rho_{0,n} \to \infty,\qquad  \text{as} \quad  n\to \infty.
\end{equation}
A regular GWVE is \textit{subcritical} if  it has almost sure extinction and $\liminf_{n\rightarrow \infty}\mu_n \rho_{0,n}<\infty$. If the extinction probability is less than one, the \textit{supercritical regime}  is in the case $\mathbb{E}[Z_n]\rightarrow \infty$, and the \textit{asymptotically degenerate regime} otherwise. The latter is a new regime where process may freeze in a positive state.
We refer to \cite{kersting2020} for the details.

In the sequel, we work exclusively with (regular and) critical Galton-Watson processes and focus on the study of the asymptotic behaviour of $Z_{n}$ conditioned on being positive. As it was noted by  \cite{kersting2020}, for a critical GWVE the so-called Yaglom's limit exists: 
\begin{theorem}[\textbf{Yaglom's limit for GWVE}]\label{eq:nonquantitativeYaglom}
	Let ~$\{Z_n: n\geq 0\}$ be a (regular) critical GWVE.  Then, there is a sequence $\{b_n: n\geq 0\}$ of positive numbers such that 
	\begin{align}\label{eq Yaglom}
		\left\{\left.\frac{1}{b_n}Z_n\ \right| Z_{n}>0\right\}
		&\stackrel{(d)}{\longrightarrow}\mathbf{e}, \qquad \mbox{  as }\ n\rightarrow \infty,
	\end{align}
	where $\mathbf{e}$ is a standard exponential random variable. The sequence $\{b_n: n\geq 0\}$ may be set as $\{\E[Z_n\mid Z_n>0]: n\geq 0\}$ or  $\{\mu_n\rho_{0,n}/2: n\geq 0\}$, since $\E[Z_n\mid Z_n>0]\sim  \mu_n\rho_{0,n}/2$ as $n\rightarrow \infty$.
\end{theorem}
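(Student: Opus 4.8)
The plan is to run the classical generating-function argument for Yaglom-type limits, adapted to the varying environment. By the continuity theorem for Laplace transforms on $[0,\infty)$, it suffices to prove that for each fixed $\lambda>0$,
\[
\mathbb{E}[e^{-\lambda Z_n/b_n}\mid Z_n>0]=\frac{F_n(e^{-\lambda/b_n})-F_n(0)}{1-F_n(0)}\;\longrightarrow\;\frac{1}{1+\lambda}=\mathbb{E}[e^{-\lambda\mathbf{e}}]\qquad\text{as }n\to\infty,
\]
where $F_n=f_1\circ\cdots\circ f_n$ is the generating function of $Z_n$ from \eqref{laplace Z}. Rewriting the middle term as $1-\frac{1-F_n(e^{-\lambda/b_n})}{1-F_n(0)}$, the claim becomes $\frac{1-F_n(e^{-\lambda/b_n})}{1-F_n(0)}\to\frac{\lambda}{1+\lambda}$, so everything reduces to a precise two-term expansion of $1-F_n(s)$ at $s=0$ and at $s=e^{-\lambda/b_n}$.

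First I would establish the reciprocal identity
\[
\frac{1}{1-F_n(s)}=\frac{1}{\mu_n(1-s)}+\frac{\rho_{0,n}}{2}+R_n(s),\qquad 0\le s<1,
\]
by telescoping a one-step estimate. For a probability generating function $f$ with $f''(1)<\infty$, a Taylor expansion at $1$ gives $\frac{1}{1-f(1-t)}=\frac{1}{f'(1)t}+\frac{f''(1)}{2f'(1)^2}+\epsilon(t)$ with $\epsilon(t)\to0$ as $t\downarrow0$. Setting $S_{n+1}:=s$, $S_k:=f_k(S_{k+1})$ (so $S_1=F_n(s)$) and $u_k:=(1-S_k)^{-1}$, this reads $u_k=u_{k+1}/f_k'(1)+\nu_k/2+\epsilon_k$ with $\epsilon_k=\epsilon_k(1-S_{k+1})$; iterating down to $k=1$ and using $\mu_k=f_1'(1)\cdots f_k'(1)$ yields the identity, with $\sum_{k=1}^{n}\nu_k/\mu_{k-1}=\rho_{0,n}$ as in \eqref{eq:mu} and $R_n(s)=\sum_{k=1}^{n}\epsilon_k/\mu_{k-1}$. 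Granting the error bound $R_n(s)=o(\rho_{0,n})$ at $s=0$ and at $s=e^{-\lambda/b_n}$, the conclusion follows by substitution. At $s=0$, criticality \eqref{eq:critical} gives $1/\mu_n=o(\rho_{0,n})$, hence $1/\mathbb{P}[Z_n>0]=(1-F_n(0))^{-1}\sim\rho_{0,n}/2$. At $s=e^{-\lambda/b_n}$ with $b_n=\mu_n\rho_{0,n}/2\to\infty$, the expansion $1-e^{-\lambda/b_n}=\lambda/b_n+O(b_n^{-2})$ gives $\frac{1}{\mu_n(1-s)}\sim\frac{\rho_{0,n}}{2\lambda}$, hence $(1-F_n(e^{-\lambda/b_n}))^{-1}\sim\frac{\rho_{0,n}}{2}\cdot\frac{1+\lambda}{\lambda}$; dividing gives $\frac{1-F_n(e^{-\lambda/b_n})}{1-F_n(0)}\to\frac{\lambda}{1+\lambda}$, as required. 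This also shows $\mathbb{P}[Z_n>0]\sim 2/\rho_{0,n}$, so $\mathbb{E}[Z_n\mid Z_n>0]=\mu_n/\mathbb{P}[Z_n>0]\sim\mu_n\rho_{0,n}/2$; since a distributional limit is unchanged under replacement of $b_n$ by an asymptotically equivalent sequence, the statement also holds with $b_n=\mathbb{E}[Z_n\mid Z_n>0]$.

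The hard part will be the error bound $R_n(s)=o(\rho_{0,n})$. Convexity of each $f_k$ (monotonicity of secant slopes) shows that $t\mapsto\frac{1}{1-f_k(1-t)}-\frac{1}{f_k'(1)t}$ decreases from $\nu_k/2$ at $t=0^{+}$ to a nonnegative value at $t=1$, yielding at once the crude uniform bound $|\epsilon_k(t)|\le\nu_k/2$; however $\sum_k\nu_k/\mu_{k-1}=\rho_{0,n}$ diverges, so this is not enough, and one must exploit that $\epsilon_k(1-S_{k+1})$ is genuinely small for the bulk of the indices $k$. The a priori bound $1-S_{k+1}\le(\mu_n/\mu_k)(1-s)$, again from convexity, controls the argument, but making $|\epsilon_k(1-S_{k+1})|$ uniformly small over the relevant range of $k$ needs a \emph{uniform} modulus of continuity near $0$ for the family $\{\epsilon_k\}$ — which is exactly what the regularity condition \eqref{cond:Kersting} provides, via \cite[Proposition 2]{kersting2020} and its consequences. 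Combining the uniform modulus on the ``good'' indices with the crude bound on the few remaining ``bad'' indices, together with a Toeplitz/Cesàro summation argument exploiting $\rho_{0,n}\to\infty$, gives $R_n(s)=o(\rho_{0,n})$; this step is the technical heart of the argument, and for it I would follow the analysis of \cite{kersting2020}.
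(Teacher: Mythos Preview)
The paper does not supply its own proof of this theorem: it is stated as a known result, and the paragraph immediately following it attributes proofs to Jagers, Bhattacharya--Perlman, Kersting, and Cardona-Tob\'on--Palau under various hypotheses. Your proposal is a faithful outline of Kersting's analytical approach via the shape function --- precisely one of the cited proofs, and the one whose toolkit the paper itself imports in Section~\ref{sec: shape} for later use. The structure is sound: the telescoping reciprocal identity is exactly the shape-function decomposition $\frac{1}{1-f_{0,n}(s)}=\frac{1}{\mu_n(1-s)}+\varphi_{0,n}(s)$; the conclusion $\mathbb{P}[Z_n>0]\sim 2/\rho_{0,n}$ and hence $\E[Z_n\mid Z_n>0]\sim\mu_n\rho_{0,n}/2$ is correct; and you rightly single out the estimate $R_n(s)=o(\rho_{0,n})$ as the crux and the regularity condition \eqref{cond:Kersting} as its source.

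One detail to correct: your claim that convexity makes $t\mapsto\frac{1}{1-f_k(1-t)}-\frac{1}{f_k'(1)t}$ \emph{decrease} from $\nu_k/2$ is not true in general. For $f(s)=s^2$ one computes $\varphi(s)=\frac{1}{2(1+s)}$, so $\varphi(0)=\tfrac12>\tfrac14=\varphi(1)$ and the map \emph{increases} in $t$. Convexity alone gives nonnegativity of $\varphi_k$; the two-sided comparison $\varphi_k(s)\asymp\varphi_k(1)$ uniform in $k$ is the content of Kersting's Lemmas~1 and~6 (quoted in the paper as \eqref{eq:bound varphi}), and it is this --- not monotonicity --- that yields a crude uniform bound on $|\epsilon_k|$ of order $\nu_k$. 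The logic of your argument is unaffected; only the justification of that intermediate bound needs to be rerouted through \eqref{eq:bound varphi} and hence through \eqref{cond:Kersting} rather than through convexity.
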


For the sequence $\{\mu_n\rho_{0,n}/2: n\geq 0\}$, the previous  limit was obtained by Jagers \cite{jagers1974galton} under extra assumptions. Afterwards, Bhattacharya and Perlman \cite{bhattacharya2017time} obtained the same result with  weaker assumptions than Jagers (but stronger than \eqref{cond:thirdmoment}). Kersting \cite{kersting2020}  provided yet another proof under condition \eqref{cond:Kersting}.  Moreover, he showed the convergence in distribution for both sequences $\{\mu_n\rho_{0,n}/2: n\geq 0\}$ and $\left\{\E[Z_n\mid Z_n>0]: n\geq 0\right\}$.  An extension in the presence of immigration and the same setting as Kersting's has been established in \cite{gonzalez2019branching}. All these authors established the exponential convergence using an analytical approach.  The asymptotic distributional behaviour of the variables $Z_{n}/b_n$ appearing in \eqref{eq Yaglom} with $\{b_n: n\geq 0\}$ given by $b_n=\mu_n\rho_{0,n}/2$ was obtained by   \cite{cardona2021yaglom}, under condition \eqref{cond:thirdmoment}, through an %probabilistic 
approach based on a two-spine decomposition argument.

The goal of this manuscript is to study the rate of convergence of the previous limit under the Wasserstein distance. We want to extend Theorem  \ref{thm:PekozRoellin} to Galton-Watson processes in a varying environment. Observe that Theorem  \ref{thm:PekozRoellin} requires a third moment condition in the offspring distribution. 
In our case, since the environment is varying, we need to control the third moment of every $q_k$.
%will need something that bound the third moment of every  $q_k$. 
Since, in addition we are  asking for  the  regular condition \eqref{cond:Kersting} and the critical condition  \eqref{eq:critical}, then it is natural to just ask conditions \eqref{cond:thirdmoment} and  \eqref{eq:critical}.

In the constant environment regime, consider $b_n= \E[Z_n\mid Z_n>0]=\mathbb{P}[Z_n>0]^{-1}$,  $\mu_n=1$ and  $\rho_{0,n}=\mu_n \rho_{0,n}=\sigma^2n$, for any $n\geq 1$.  Then, bound \eqref{eq:dPeRo1} in Theorem \ref{thm:PekozRoellin} can be written as 
\begin{align}\label{eq:quantitativePekRoe}
	d_W\left(\left\{\left.\frac{1}{b_n}Z_n\ \right| Z_{n}>0\right\},\mathbf{e}\right)
	&\leq C\frac{\log(n)}{n}=\sigma^2 C\left(\frac{\log(\sigma^2n)+\log(\sigma^{-2} )}{\sigma^2 n}\right),
\end{align}
for some constant $C>0$ independent of $n$.  In this regime, we have that the sequence $\rho_{0,n}$ coincides with $\mu_n \rho_{0,n}$ and is of the order $n$. In particular, \eqref{eq:quantitativePekRoe} implies that the Wasserstein distance in the left-hand side of \eqref{eq:quantitativePekRoe} is bounded by a function of $\rho_{0,n}$ and $\mu_n \rho_{0,n}$. In a general varying environment, $\rho_{0,n}$ and $\mu_n \rho_{0,n}$ do not necessarily coincide. Furthermore, as demonstrated in the examples from Section \ref{sec:examples}, these quantities can grow in such a way that their quotient could converge to zero in some instances or infinity in others. Keeping in mind the fact of the discussion above, we seek pursuit a bound of the form $\psi(\mu_{n}\rho_{0,n})+\phi(\rho_{0,n})$ for functions $\psi,\phi:\R\rightarrow\R$ continuously vanishing at infinity. 
In the following theorems, the results stated exhibit a tradeoff between generality on the environment $Q$ and simplicity in the  form of the resulting bounds. Specifically, if a bound holds for a wide range of environments $Q$, it becomes challenging to give a simple expression for it. Reciprocally, an accurate description of  decay is available only for a subgroup of critical GWVE. This tradeoff may be inherent to the approach we are following, and we have not discovered any evidence suggesting that the same phenomena will emerge in different perspectives on the problem. We will start with the general case.
We are going to analyse the Wasserstein distance between  $\{b_n^{-1} Z_n \mid Z_n>0\}$  and the exponential distribution. For $\{b_n: n\geq 0\}$, we can use the sequences $\left\{\E[Z_n\mid Z_n>0]: n\geq 0\right\}$ or  $\{\mu_n\rho_{0,n}/2: n\geq 0\}$.  We decide to use the first sequence because we obtain a mean one  process. The proof is based on Theorem \ref{theo:pekoz}, which can be applied to random variables with finite mean. If one wants to use the second sequence, there will be an extra term that comes  from $| 2(\mu_n\rho_{0,n})^{-1}\E[Z_n\mid Z_n>0]-1|$.

\begin{theorem}\label{theo:speedYaglom}
	Let ~$\{Z_n: n\geq 0\}$ be a critical GWVE that satisfies condition \eqref{cond:thirdmoment} and define $b_n:=\E[Z_n\mid Z_n>0]$, $n\geq 0$. Then,
	\begin{align*}
		d_W\left(\left\{\left.\frac{1}{b_n}Z_n\ \right| Z_{n}>0\right\},\mathbf{e}\right)
		&\leq C\left(  \frac{1}{\mu_n \rho_{0,n}} + \frac{r_n}{\rho_{0,n}}\right),
	\end{align*}
	where $\mathbf{e}$ is a standard exponentially distributed random variable, $C>0$ is a constant which is independent of $n$  and 
	\begin{equation}\label{eq:rn}
		r_n:=\sum_{j=1}^{n-1}\frac{\nu_j}{\mu_{j-1}} \frac{\left(1+f_j'(1)\right)}{\mu_j(\rho_{0,n}- \rho_{0,j})} + \frac{ \nu_n}{ \mu_{n-1}} \left(1+f_n'(1)\right), \qquad n\ge 2.
	\end{equation}
\end{theorem}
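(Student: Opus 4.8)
The plan is to follow the Peköz--Röllin strategy, adapted to the varying-environment setting via the extended Stein bound of Theorem \ref{theo:pekoz}. Set $W_n := b_n^{-1} Z_n$ conditioned on $\{Z_n>0\}$, which by construction of $b_n = \E[Z_n\mid Z_n>0]$ has mean one, so the $|m-1|$ term in Theorem \ref{theo:pekoz} vanishes. According to that theorem, the Wasserstein distance $d_W(W_n,\mathbf{e})$ is controlled by a quantity of the form $\E\bigl|\E[W_n^{*}-W_n\mid W_n]\bigr|$ (plus lower-order terms), where $W_n^{*}$ denotes the equilibrium transform of $W_n$. The key structural input, coming from \cite{MR1349164} extended to a varying environment, is that the size-biased version of $Z_n$ can be realized as the population in a Galton-Watson tree in the same environment $Q$ with a distinguished spine along which reproduction is size-biased; this gives an explicit coupling between $W_n$ and (a variable close to) $W_n^{*}$ whose difference is governed by the extra mass produced along the spine.

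First I would write down the spine decomposition precisely: conditionally on the spine, $Z_n$ equals $1$ plus a sum, over each generation $j=1,\dots,n$ and each ``extra'' offspring of the spine individual at generation $j-1$, of an independent copy of $Z_{n}$-type subtree-population emanating at generation $j$ in the environment $Q$ shifted by $j$. The number of extra offspring at generation $j$ has the size-biased-minus-one law of $q_j$, whose mean is $f_j''(1)/f_j'(1) = \nu_j f_j'(1)$, and each emanating subtree at generation $j$ has mean $\mu_n/\mu_j$. Dividing by $b_n \sim \mu_n\rho_{0,n}/2$ and taking conditional expectation, the dominant contribution of each term is of order $\frac{\nu_j f_j'(1)}{\mu_j}\cdot\frac{\mu_n/\mu_j}{\mu_n\rho_{0,n}} = \frac{\nu_j}{\mu_{j-1}}\cdot\frac{1}{\mu_j\rho_{0,n}}$; summing over $j$ and comparing with the partial sums $\rho_{0,j}$ that appear because a subtree started at generation $j$ only accumulates variance over generations $j,\dots,n$, one is led exactly to the expression $r_n/\rho_{0,n}$ with $r_n$ as in \eqref{eq:rn}. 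The isolated last term $\frac{\nu_n}{\mu_{n-1}}(1+f_n'(1))$ comes from the boundary generation $j=n$, where the emanating subtree has not yet had time to relax to the Yaglom shape and must be bounded by its (first and second) moments directly; here condition \eqref{cond:thirdmoment}, in the form $f_j'''(1)\le c f_j''(1)(1+f_j'(1))$, is what lets us control the third moments uniformly and convert them into the factor $1+f_j'(1)$. The term $1/(\mu_n\rho_{0,n})$ collects the remaining Stein error terms of Theorem \ref{theo:pekoz} — essentially $\E[W_n^2] - 2$ and the survival-probability correction $|2 b_n^{-1}(\mu_n\rho_{0,n})^{-1}\cdot\mu_n\rho_{0,n} - 2|$-type quantities — which by the second-moment asymptotics $\E[Z_n(Z_n-1)] = \mu_n^2\rho_{0,n}$ are of order $(\mu_n\rho_{0,n})^{-1}$.

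The main obstacle I expect is twofold. First, establishing the varying-environment spine/size-bias identity with enough quantitative control: in the constant case \cite{MR1349164} gives a clean tree with i.i.d.\ subtrees, whereas here every generation has its own offspring law, the emanating subtrees are non-identically-distributed GWVE's, and one must carry along explicit moment bounds for each; checking that these subtrees are themselves (regular) critical-type and that their normalized means and variances behave as $\mu_n/\mu_j$ and $\rho_{0,n}-\rho_{0,j}$ requires care. Second, and more delicate, is controlling the conditional expectation $\E[\cdot\mid W_n]$ rather than the unconditional one: one needs that conditioning on the total population $Z_n$ does not blow up the relevant expectations, which in \cite{pekoz2011} is handled through a careful second-moment / size-bias argument, and the analogue here must be done generation-by-generation along the spine. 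Once these are in place, the remaining steps — plugging the coupling bound into Theorem \ref{theo:pekoz}, bounding each summand using \eqref{cond:thirdmoment}, and reorganizing the sum into $r_n$ — are routine estimates, and the stated bound $C\bigl(\tfrac{1}{\mu_n\rho_{0,n}} + \tfrac{r_n}{\rho_{0,n}}\bigr)$ follows.
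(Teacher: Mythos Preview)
Your high-level strategy---Stein's method via Theorem \ref{theo:pekoz} plus the spine/size-biased tree---matches the paper. But there is a genuine gap in how you propose to execute it.

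First, Theorem \ref{theo:pekoz} does not give a bound in terms of $\E\bigl|\E[W_n^{*}-W_n\mid W_n]\bigr|$; it gives $2\,\E[|W_n-W_n^e|]$ for \emph{any} coupling of $W_n$ and its equilibrium transform $W_n^e$ on a common probability space. So the ``main obstacle'' you anticipate---controlling a conditional expectation given $W_n$---is not the right one. What you actually need is an explicit coupling, and this is the heart of the paper's argument that your sketch misses. The paper realizes \emph{both} variables inside the same size-biased tree $\dot{\mathbf T}$: on the one hand $U\dot Y_n\stackrel{(d)}{=}R_n-U$, since the spine endpoint is uniform among the $\dot Z_n$ particles; on the other hand $Y_n\stackrel{(d)}{=}\{\dot Z_n\mid L_n=0\}$, i.e.\ the conditioned process is recovered by forcing the spine to be the leftmost surviving particle. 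Writing $L_n=\sum_j L_{n,j}$ and $R_n=1+\sum_j R_{n,j}$ along the spine and using independence of the pairs $(L_{n,j},R_{n,j})$, the difference $|Y_n-U\dot Y_n|$ is controlled by $\E[U]+\sum_j \E[(R_{n,j}+\widetilde R_{n,j})\mathds 1_{\{L_{n,j}\neq 0\}}]$. This is the decomposition that produces the bound, and it is not visible in your outline.

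Second, two of your attributions are off. The factor $(\rho_{0,n}-\rho_{0,j})^{-1}$ does not come from ``variance accumulated over generations $j,\dots,n$'' but from bounding the \emph{survival probability} $\mathbb P[Z_{n-j}^{Q_j}>0]$ of the subtree emanating at generation $j$, via the shape-function machinery of Section~\ref{sec: shape}; this is where Kersting's regularity estimates enter. And the term $1/(\mu_n\rho_{0,n})$ is not a second-moment correction of the type $\E[W_n^2]-2$: it is simply the contribution $\E[U]/b_n$, after using $b_n=\mu_n/\mathbb P[Z_n>0]$ and the survival bound $\mathbb P[Z_n>0]\le C/\rho_{0,n}$. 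Once the coupling above is in place, the five estimates (for $\E[\widetilde R_{n,j}\mathds 1_{A_{n,j}^c}]$, $\E[R_{n,j}\mathds 1_{A_{n,j}^c}]$, $\mathbb P[A_{n,j}^c]$, and $\mathbb P[Z_{n-j}^{Q_j}>0]$) are indeed routine and condition \eqref{cond:thirdmoment} enters exactly where you say.
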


It seems difficult to directly deduce rates of convergence from \eqref{eq:rn},  as the term $n$ is both in the upper limit of the sum, and in the denominator $\rho_{0,n}-\rho_{0,j}$. The following estimates may be more useful. For stating the result in its most general version, we define the following bound on the first moment of the offspring, 
\begin{align*}
	\mathfrak{M}_{n}
	:=\sup_{1\leq k\leq n}f_k^{\prime}(1), \quad\text{for}\quad  n\geq 1,
\end{align*} 
as well as the following sum
\begin{align}\label{eq:sn}
	s_n
	&= \sum_{k=2}^{n-1} \left(\log(\rho_{0,k} \mu_k ) +\log\left(f_k'(1)\right)\right)
	\left|\frac{1}{\mu_{k-2}} - \frac{1}{\mu_{k}}\right|, \quad \text{for}\quad n\ge3.
\end{align}
In a variety of examples,  these two terms are bounded uniformly over $n$, instance in which they might be controlled from above by fixed constants. 
\begin{theorem}\label{theo:rnestimation}
	Let $\{Z_n: n\geq 0\}$ be a critical GWVE that satisfies condition \eqref{cond:thirdmoment} and define $b_n:=\E[Z_n\mid Z_n>0]$, $n\geq 0$. Assume that there exists $a>0$ such that $f_{n}''(1)\ge a$ for all $n\ge 1$. Then
	\begin{align}\label{ineq:mainsecond}
		&d_W\left(\left\{\left.\frac{1}{b_n}Z_n\ \right| Z_{n}>0\right\},\mathbf{e}\right)
		\leq  C (1+	\mathfrak{M}_{n})^5\left(   \frac{\log(\mu_n \rho_{0,n})+\log(f'_n(1))}{\mu_n\rho_{0,n}} + \frac{{s}_n}{\rho_{0,n}}\right),
	\end{align}	
	where $\mathbf{e}$ is a standard exponentially distributed random variable and $C>0$ is a constant which is independent of $n$.
\end{theorem}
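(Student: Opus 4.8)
The plan is to derive \eqref{ineq:mainsecond} from Theorem~\ref{theo:speedYaglom} by bounding the quantity $r_n$ defined in \eqref{eq:rn} from above by the more explicit expression appearing on the right-hand side of \eqref{ineq:mainsecond}, up to the polynomial factor $(1+\mathfrak{M}_n)^5$. Since Theorem~\ref{theo:speedYaglom} already gives
\[
d_W\left(\left\{\left.\frac{1}{b_n}Z_n\ \right| Z_{n}>0\right\},\mathbf{e}\right)\leq C\left(\frac{1}{\mu_n\rho_{0,n}}+\frac{r_n}{\rho_{0,n}}\right),
\]
the first summand $1/(\mu_n\rho_{0,n})$ is already dominated by the first term in \eqref{ineq:mainsecond}, so the whole task reduces to estimating $r_n/\rho_{0,n}$. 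Recall
\[
r_n=\sum_{j=1}^{n-1}\frac{\nu_j}{\mu_{j-1}}\,\frac{1+f_j'(1)}{\mu_j(\rho_{0,n}-\rho_{0,j})}+\frac{\nu_n}{\mu_{n-1}}\left(1+f_n'(1)\right),
\]
and note that $\nu_j/\mu_{j-1}=(\rho_{0,j}-\rho_{0,j-1})$ by the definition \eqref{eq:mu} of $\rho_{0,n}$, so the summands telescope against the increments of $\rho_{0,\cdot}$. The boundary term $\frac{\nu_n}{\mu_{n-1}}(1+f_n'(1))$ is treated separately: using $\nu_n=f_n''(1)/f_n'(1)^2$, the factor $1+f_n'(1)$, and $f_n'(1)\le\mathfrak{M}_n$, one bounds it by a constant times $(1+\mathfrak{M}_n)^{\,k}\,\frac{\rho_{0,n}-\rho_{0,n-1}}{\rho_{0,n}}$ for a suitable small power $k$, which after division by $\rho_{0,n}$ is absorbed into the $\log(\mu_n\rho_{0,n})/(\mu_n\rho_{0,n})$ term once one uses the hypothesis $f_n''(1)\ge a$ to compare the single increment with $\rho_{0,n}$.

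The heart of the argument is the main sum $\sum_{j=1}^{n-1}\frac{\rho_{0,j}-\rho_{0,j-1}}{\mu_j}\cdot\frac{1+f_j'(1)}{\rho_{0,n}-\rho_{0,j}}$. The idea is to view this as (a discrete version of) an integral of the form $\int \frac{d\rho}{\mu(\rho)(\rho_{0,n}-\rho)}$ and to exploit that the antiderivative of $1/(\rho_{0,n}-\rho)$ is $-\log(\rho_{0,n}-\rho)$, which is what produces the logarithm in the final bound. Concretely, I would split the range of $j$ into a "bulk" part where $\rho_{0,j}$ is, say, at most $\rho_{0,n}/2$ — there the denominator $\rho_{0,n}-\rho_{0,j}$ is comparable to $\rho_{0,n}$ and the sum of increments $\rho_{0,j}-\rho_{0,j-1}$ telescopes to at most $\rho_{0,n}$, giving a contribution of order $(1+\mathfrak{M}_n)/\mu_{\min}$ type, which must be re-expressed via the $s_n$ quantity — and a "tail" part where $\rho_{0,j}$ is close to $\rho_{0,n}$, where one pays the logarithm $\log(\rho_{0,n})$ coming from $\sum \frac{\rho_{0,j}-\rho_{0,j-1}}{\rho_{0,n}-\rho_{0,j}}\approx\log\rho_{0,n}$, while the factor $1/\mu_j$ is controlled. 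Summation by parts (Abel summation) on the factor $1/\mu_j$ is the natural device: writing $\frac{1}{\mu_j}=\frac{1}{\mu_{j-2}}-\left(\frac{1}{\mu_{j-2}}-\frac{1}{\mu_j}\right)$ isolates a telescoping main term and a remainder weighted by $\left|\frac{1}{\mu_{j-2}}-\frac{1}{\mu_j}\right|$, which is exactly the weight appearing in the definition \eqref{eq:sn} of $s_n$; the logarithmic factors $\log(\rho_{0,k}\mu_k)+\log(f_k'(1))$ in $s_n$ arise from the partial sums $\sum_{j\le k}\frac{\rho_{0,j}-\rho_{0,j-1}}{\rho_{0,n}-\rho_{0,j}}(1+f_j'(1))$ that multiply these increments after the rearrangement.

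I would carry out the steps in this order: (i) rewrite $r_n$ using $\nu_j/\mu_{j-1}=\rho_{0,j}-\rho_{0,j-1}$ and separate the boundary term; (ii) bound $1+f_j'(1)\le 1+\mathfrak{M}_n$ and pull this (and any further powers of $1+\mathfrak{M}_n$ incurred below) out front, budgeting the total exponent to reach $5$; (iii) handle the boundary term using $f_n''(1)\ge a$ to show $\rho_{0,n}-\rho_{0,n-1}$ is, up to constants and powers of $1+\mathfrak M_n$, at most a constant, hence negligible after dividing by $\rho_{0,n}$, or more precisely absorbable into the first term of \eqref{ineq:mainsecond}; (iv) for the main sum, apply Abel summation against $1/\mu_j$, producing a telescoping main contribution plus the $s_n$-type remainder; (v) estimate the telescoping main contribution by the elementary inequality $\sum_{j}\frac{\rho_{0,j}-\rho_{0,j-1}}{\rho_{0,n}-\rho_{0,j}}\le C\log\rho_{0,n}$ together with the hypothesis $f_j''(1)\ge a$ (which lower-bounds the smallest increment and thus caps the number of terms contributing near the singularity), yielding the $\log(\mu_n\rho_{0,n})+\log(f_n'(1))$ over $\mu_n\rho_{0,n}$ term; (vi) collect everything and divide by $\rho_{0,n}$. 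The main obstacle I anticipate is step (v): making the passage from the discrete sum near the singularity $j\approx n$ to the clean logarithm rigorous requires controlling how fast $\rho_{0,j}$ approaches $\rho_{0,n}$ and how $\mu_j$ behaves there, and it is precisely to sidestep delicate assumptions on this that the hypothesis $f_n''(1)\ge a$ is imposed — it guarantees the increments $\rho_{0,j}-\rho_{0,j-1}=\nu_j/\mu_{j-1}\ge a/(\mu_{j-1}f_j'(1)^2)$ do not degenerate too badly, so that the number of indices with $\rho_{0,n}-\rho_{0,j}$ below any threshold is controlled. A secondary technical point is keeping honest track of the accumulated powers of $(1+\mathfrak{M}_n)$ so that the final exponent is exactly $5$ and not larger; this is bookkeeping rather than a genuine difficulty.
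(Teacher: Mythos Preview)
Your overall plan matches the paper's: reduce via Theorem~\ref{theo:speedYaglom} to bounding $r_n$, rewrite $(1+f_j'(1))/\mu_j$ as $h_j:=1/\mu_j+1/\mu_{j-1}$, and perform Abel summation on $h_j$ so that the differences $h_{k-1}-h_k=1/\mu_{k-2}-1/\mu_k$ appear, yielding the $s_n$-remainder plus a boundary term carrying $g_n^{(n)}:=\sum_{j=1}^{n-1}\frac{\nu_j/\mu_{j-1}}{\rho_{0,n}-\rho_{0,j}}$. (A small warning: do \emph{not} pull the factor $1+f_j'(1)$ out as $1+\mathfrak{M}_n$ before the summation by parts as you suggest in step~(ii), or you will get the wrong increments; keeping it inside $h_j$ is exactly what produces the stride-two differences $1/\mu_{k-2}-1/\mu_k$ that define $s_n$.) Two further technical points need sharpening.

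First, the partial sums that multiply $|1/\mu_{k-2}-1/\mu_k|$ after summation by parts are $g_k^{(n)}$, with $\rho_{0,n}$ still in the denominator, whereas $s_n$ is built from $\log(\rho_{0,k}\mu_k)+\log(f_k'(1))$. The paper inserts the simple but essential monotonicity step $g_k^{(n)}\le g_k^{(k)}$ (since $\rho_{0,n}-\rho_{0,j}\ge\rho_{0,k}-\rho_{0,j}$ for $j<k$) and only then bounds $g_k^{(k)}$ by a logarithm depending on $k$; without this you cannot identify the Abel remainder with $s_n$.

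Second, your proposed mechanism for $g_m^{(m)}=\sum_{j<m}\frac{\rho_{0,j}-\rho_{0,j-1}}{\rho_{0,m}-\rho_{0,j}}\le C\log(\cdot)$ is not correct. A uniform lower bound on the increments does \emph{not} control the dangerous last term $j=m-1$, which equals the ratio $(\rho_{0,m-1}-\rho_{0,m-2})/(\rho_{0,m}-\rho_{0,m-1})$ of two consecutive increments and can be arbitrarily large regardless of any lower bound on individual increments. What the paper actually proves, combining $f_k''(1)\ge a$ with the consequence $f_k''(1)\le C(1+f_k'(1))f_k'(1)$ of \eqref{cond:thirdmoment}, is a \emph{ratio} bound on consecutive increments of the partition $t_j^{(m)}:=\rho_{0,j}/\rho_{0,m}$, namely $t_j^{(m)}-t_{j-1}^{(m)}\le C(1+\mathfrak{M}_n)^3\,(t_{j+1}^{(m)}-t_j^{(m)})$. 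This permits an index shift converting the (divergent) right-endpoint Riemann sum for $\int_0^1(1-x)^{-1}\,\mathrm{d}x$ into a left-endpoint one, which is then dominated by $-\log(1-t_{m-1}^{(m)})=\log(\rho_{0,m}\mu_m)+\log(f_m'(1)/f_m''(1))$. Your bulk/tail split is not needed. The exponent $5$ then arises as $2$ from $h_{n-1}=f_{n-1}'(1)(1+f_n'(1))/\mu_n$ times $3$ from this increment-ratio bound.
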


\begin{remark}\label{rem:remarkq0q1}
	A sufficient condition for the sequence  $\{f_n''(1): n\ge 1\}$ to be bounded away from zero is the following
	\begin{equation*}
		\limsup_{n\rightarrow\infty} \big(q_n[0] + q_n[1]\big)<1. 
	\end{equation*} 
	The latter condition tells us that we are excluding trivial situations as we already mentioned for the analogue condition \eqref{eq:q0q1} in the constant environment case. 
	%Roughly speaking, this assumption prevents the possibility of every individual gives almost exactly  one offspring in the next generation, thereby causing the population to stagnate. 
	This condition implies that the sequence $\{q_n[0] + q_n[1]: n\ge 0\}$ is bounded away from one. Roughly speaking, this condition prevents the scenario where the population stagnates in a state where every individual only produces zero or one offspring in the next generation.
	Bhattacharya and Perlman asked for the latter condition in their hypothesis for Yaglom’s Theorem, see (H2) in \cite{bhattacharya2017time}.
\end{remark}

In particular, if the mean and the second factorial moment of the offspring distribution are bounded away from zero and infinity and  if we \textit{also} assume $\lim_{n\to \infty} {s}_n <\infty$, we can obtain an explicit bound only depending on $\rho_{0,n}$ and $\mu_n\rho_{0,n}$. More precisely, we have the following direct consequence.

\begin{corollary}\label{theo:speedYaglomzero}
	Let ~$\{Z_n: n\geq 0\}$ be a critical GWVE that satisfies condition \eqref{cond:thirdmoment}  and define $b_n:=\E[Z_n\mid Z_n>0]$, $n\geq 0$. %Assume that the sequence $\{f_n'(1): n\ge 1\}$ is bounded from above and that ~$\lim_{n\to \infty} {s}_n<\infty$, then
	Assume that there exists $0<a\leq A<\infty$ such that $a\leq f_n'(1),\ f_n^{\prime\prime}(1)\leq A$ for all $n\geq 1$,
	then
	\begin{align}
		d_W\left(\left\{\left.\frac{1}{b_n}Z_n\ \right| Z_{n}>0\right\},\mathbf{e}\right)
		&\leq C\left(  \frac{\log(\mu_n \rho_{0,n})}{\mu_n \rho_{0,n}} + \frac{s_n}{\rho_{0,n}}\right).
	\end{align}
	where $\mathbf{e}$ is a standard exponentially distributed random variable and $C>0$ is a constant which is independent of $n$. In particular, if we also assume ~$\lim_{n\to \infty} {s}_n<\infty$, we have
	\begin{align}\label{eq:mainbound1}
		d_W\left(\left\{\left.\frac{1}{b_n}Z_n\ \right| Z_{n}>0\right\},\mathbf{e}\right)
		&\leq C\left(  \frac{\log(\mu_n \rho_{0,n})}{\mu_n \rho_{0,n}} + \frac{1}{\rho_{0,n}}\right).
	\end{align}
\end{corollary}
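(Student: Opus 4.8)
\textbf{Proof proposal for Corollary \ref{theo:speedYaglomzero}.}

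The plan is to derive the corollary directly from Theorem \ref{theo:rnestimation} by checking that its hypotheses are satisfied and simplifying the resulting bound under the stronger two-sided control $a\le f_n'(1),f_n''(1)\le A$. First I would observe that the assumption $f_n''(1)\ge a>0$ of Theorem \ref{theo:rnestimation} is immediate here, so the estimate \eqref{ineq:mainsecond} applies. Next I would absorb the prefactor: since $f_k'(1)\le A$ for all $k\le n$, we have $\mathfrak{M}_n=\sup_{1\le k\le n}f_k'(1)\le A$, so $(1+\mathfrak{M}_n)^5\le(1+A)^5$ is a constant depending only on $A$, which can be folded into the constant $C$. Likewise $\log(f_n'(1))\le\log A$, a bounded quantity, and since $\mu_n\rho_{0,n}\to\infty$ by criticality \eqref{eq:critical}, the term $\log(f_n'(1))/(\mu_n\rho_{0,n})$ is dominated by (a constant times) $\log(\mu_n\rho_{0,n})/(\mu_n\rho_{0,n})$; more precisely $\log(\mu_n\rho_{0,n})+\log(f_n'(1))\le \log(\mu_n\rho_{0,n})+|\log A|\le C\log(\mu_n\rho_{0,n})$ for $n$ large, and one adjusts $C$ to cover the finitely many remaining $n$. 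This yields the first displayed inequality of the corollary.

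For the second displayed inequality \eqref{eq:mainbound1}, I would invoke the additional hypothesis $\lim_{n\to\infty}s_n<\infty$. Since $s_n$ is (by its definition \eqref{eq:sn} as a sum of nonnegative terms, the summands being products of logarithms of quantities tending to infinity and absolute values) nondecreasing in $n$, convergence of $s_n$ means $\sup_n s_n=:S<\infty$. Hence $s_n/\rho_{0,n}\le S/\rho_{0,n}$, and again absorbing $S$ into $C$ gives the bound $C(\log(\mu_n\rho_{0,n})/(\mu_n\rho_{0,n})+1/\rho_{0,n})$, which is \eqref{eq:mainbound1}.

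The only point requiring a little care — and the closest thing to an obstacle — is that the estimates above are asymptotic (they rely on $\mu_n\rho_{0,n}\to\infty$, which holds only eventually), whereas the corollary claims a bound valid for all $n\ge 1$ with a single constant $C$. This is handled by the standard device: the Wasserstein distance on the left-hand side is uniformly bounded (both $\{b_n^{-1}Z_n\mid Z_n>0\}$ and $\mathbf e$ have mean one, hence the distance is at most $2$, or one can simply note there are finitely many small $n$ and $\log(\mu_n\rho_{0,n})/(\mu_n\rho_{0,n})$ together with $1/\rho_{0,n}$ stay bounded below away from $0$ on any finite range), so enlarging $C$ if necessary makes the inequality hold for every $n$. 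No further probabilistic input is needed; the corollary is a purely computational consequence of Theorem \ref{theo:rnestimation}.
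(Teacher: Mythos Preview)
Your proposal is correct and matches the paper's approach exactly: the corollary is presented there as a ``direct consequence'' of Theorem~\ref{theo:rnestimation}, with no separate proof given, and your reduction (bounding $\mathfrak{M}_n$ and $\log f_n'(1)$ by constants, then absorbing $\sup_n s_n$) is precisely what is intended. One minor correction: the summands defining $s_n$ in \eqref{eq:sn} need not be nonnegative for small $k$ (since $\log(\rho_{0,k}\mu_k)+\log f_k'(1)$ can be negative before $\mu_k\rho_{0,k}$ exceeds~$1/a$), so $s_n$ is not obviously nondecreasing---but this is irrelevant, as boundedness of $(s_n)$ follows simply from the assumed convergence $\lim_{n\to\infty}s_n<\infty$.
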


\begin{remark}
	In the constant environment regime, the hypotheses of Corollary \ref{theo:speedYaglomzero} hold. Indeed, in the critical case, $f_n'(1)=1$, $\mu_n=1$, $\nu_n=\sigma^2$, $\rho_{0,n}=\sigma^2 n$, $b_n=\mathbb{P}[Z_n>0]^{-1}$ and $s_n=0$, for all $n\geq 3$. Moreover, condition \eqref{cond:thirdmoment} is equivalent to $\E[(Z_1)^3]<\infty$.
	%\nat{and condition \eqref{eq:Hipotqn} boils down to condition \eqref{eq:q0q1} which we are assuming}. 
	Then, the bound \eqref{eq:mainbound1} is exactly  inequality \eqref{eq:quantitativePekRoe} and it implies Theorem \ref{thm:PekozRoellin} as a consequence.
\end{remark}

%
%	\begin{remark}
	%	A slightly stronger condition than \eqref{eq:Hipotqn} is the following
	%		\begin{equation*}
		%			\sup_{n\ge 1} \big(q_n[0] + q_n[1]\big) = \sup_{n\ge 1} \left( \mathbb{P}[\chi_{i}^{(n)}=0]+ \mathbb{P}[\chi_{i}^{(n)}=1]\right)   < 1,
		%		\end{equation*}
	%		which does not allow any generations where $\mathbb{P}[\chi_{i}^{(n)}=1]$ is very close to 1. Roughly speaking, we are preventing reproduction laws for which each particle has almost exactly one child in the next generation. These types of conditions have been used before to obtain Yaglom's theorem under stronger hypotheses than \ref{cond:thirdmoment} (see (H2) in \cite{bhattacharya2017time}). 
	%	\end{remark} 

\noindent  The remainder of this paper is organized as follows. In Section \ref{sec:examples} we discuss a variety of different examples. In Section \ref{sec:preliminaries} we  
present Stein’s method for an exponential distribution and we relate it with the size biased distribution of $\{Z_n:n\geq 0\}$. Section \ref{sec:proofs}   contains the proofs  of our main theorems.

\section{Examples}\label{sec:examples}
In the next examples we discuss some cases where we can find explicitly the rate of convergence in Yaglom’s Theorem. In the first three, we use Theorem  \ref{theo:rnestimation} or  Corollary \ref{theo:speedYaglomzero}  to give  the bounds. Example \ref{ex:linear} shows that this rate  could be suboptimal. In what follows, $C_1, C_2, \dots$ are strictly positive constants independents of $n$ that change between examples.

\begin{example}[\textbf{Symmetric distributions}]   
	For each $n\geq 1$ and $a\in (0,1)$, let 
	\begin{equation*}
		q_n[0] = q_n[2]= \frac{1}{2n^{a}}\qquad \text{and} \qquad q_n[1]= 1-\frac{1}{n^a}.
	\end{equation*}
	Then, ~$f_n'(1) =1$, ~$f_n''(1)= 1/n^a$, and $f_n'''(1)= 0$ for every $n\geq 1$. The conditions of Theorem \ref{theo:rnestimation} do not hold due to the fact that $f_n''(1)$ converges to zero, so we will make use of Theorem \ref{theo:speedYaglom} instead. Note that,
	$$\mu_n=1,\qquad \nu_n= \frac{1}{n^a}\qquad  \mbox{and}  \qquad \rho_{0,n}= \sum_{k=0}^{n-1}\frac{1}{(k+1)^{a}}, \qquad   \mbox{for }\quad  n\ge 1.$$
	%One can easily check that $\mathfrak{M}_{n},\mathfrak{R}_{n},\mathfrak{r}_{n}$ are bounded by a constant independent of $n$.
	By an integral comparison, one can %easily 
	check that  $\rho_{0,n}\sim n^{1-a}/(1-a)$  as $n$ tends to infinity and we are in the critical regime. By the same integral comparison, we also have as $n\to \infty$
	\[\rho_{0,n} - \rho_{0,j} = \sum_{k=j}^{n-1} \frac{1}{(k+1)^{a}}\geq  
	\int_{j}^{n-1} \frac{1}{(x+1)^{a}}dx
	= \frac{1}{1-a}(n^{1-a} - (j+1)^{1-a}).\]
	Moreover, by a Taylor expansion, $n^{1-a}-(j+1)^{1-a}\geq (1-a)n^{-a}(n-j-1),$ so we deduce that there exists a constant $C_1>0$ such that 
	\begin{align*}
		\sum_{j=1}^{n-1} \frac{1}{j^a(n^{1-a} - (j+1)^{1-a})}
		&\leq C_1n^{a}\sum_{j=1}^{n-1} \frac{1}{j^{a}(n-j)}  
		\leq C_2n^{a}\left(\sum_{j=1}^{n/2} \frac{1}{j^{a}(n-j)}+\sum_{j=n/2}^{n-1} \frac{1}{j^{a}(n-j)}\right)\\ 
		&\leq C_3n^{a}\left(\frac{2}{n}\sum_{j=1}^{n/2} \frac{1}{j^{a}}+ \frac{2^{a}}{n^{a}}\sum_{j=n/2}^{n-1} \frac{1}{n-j}\right)\\
		&= C_4\left(\frac{2}{n}\sum_{j=1}^{n/2} \frac{1}{(j/n)^{a}}+ \frac{2^{a}}{n^{a}}\sum_{j=1}^{n/2} \frac{1}{j}\right).
	\end{align*}
	The second term in the right is bounded by a constant multiple of $\log(n)/n^{a}$, while the first one converges towards 
	$2\int_{0}^{1/2}x^{-a}dx$, which is finite due to the condition $a\in(0,1)$. From this point, we conclude that there exists a constant $K>0$ such that 
	\begin{align*}
		\sum_{j=1}^{n-1} \frac{1}{j^a(n^{1-a} - (j+1)^{1-a})}
		&\leq K.
	\end{align*}
	From here, it easily follows that $r_{n}$ is bounded. Hence, appealing to Theorem \ref{theo:speedYaglom}, 
	\begin{equation*}
		d_W\left(\left\{\left.\frac{1}{b_n}Z_n\ \right| Z_{n}>0\right\},\mathbf{e}\right)
		\leq C_5 \frac{1}{n^{a}}.
	\end{equation*}
\end{example}

\begin{example}[\textbf{Poisson distributions with $\mu_n$ increasing linearly}] Let $Q=\{q_n:  n\geq 1\}$ be a sequence
	of Poisson distributions with parameters  $\lambda_n=f_n'(1)$ for $n\geq 1$.  Recall that for a Poisson distribution $f''_n(1)= f'_n(1)^2$ and $f'''_n(1)=f'_n(1)^3\leq f'_n(1)^2 (1+f'_n(1))$  for all $n\geq 1$. This implies that  \eqref{cond:thirdmoment} is satisfied and 
	$$\mu_n=\lambda_1\cdots \lambda_n, \qquad \nu_n=1,\qquad  \text{and} \qquad \rho_{0,n} = \sum_{k=0}^{n-1}\frac{1}{\mu_{k}},\qquad n\ge1.$$
	Let us consider the special case when $$ \lambda_1=1\qquad \text{and}\qquad \lambda_n = \frac{n}{n-1}\quad  \text{for}\quad  n>1.$$
	It follows that ~$\mu_n = n$ for $n\geq 1$ and ~$\rho_{0,n} \sim \log (n)$ as ~$n\to \infty$. Thus, the criticality condition \eqref{eq:critical} is satisfied, and we are in a critical region close to supercritical behaviour.  Observe that ~$1\leq \lambda_n\leq 2$ for any $n\ge 1$, so that $f_{n}^{\prime\prime}(1)=\lambda_n^2$ lies within the interval $[1,4]$ for all $n\geq 1$. Moreover,  
	\begin{equation*}
		\begin{split}
			s_n\leq C_1\sum_{k=3}^{\infty} \frac{1}{k(k-2)}+\sum_{k=2}^{\infty} \log(k \rho_{0,k} ) \left|\frac{1}{{k-2}}-\frac{1}{k}\right| \  \leq  C_2\sum_{k=3}^{\infty}  \frac{\log(k \log (k))}{k(k-2)},
		\end{split}
	\end{equation*}
	%$s_n \to 1.91$ as $n\to \infty$
	
	Note that the right-hand side sequence converges, so the conditions of the second part of Corollary~\ref{theo:speedYaglomzero} hold.
	Hence, appealing to Corollary \ref{theo:speedYaglomzero}, we deduce that
	\[
	d_W\left(\left\{\left.\frac{1}{b_n}Z_n\ \right| Z_{n}>0\right\},\mathbf{e}\right)
	\leq C_3 \left(   \frac{\log(n \log(n))}{n \log(n)} + \frac{1}{\log(n)}\right) \leq C_4 \frac{1}{\log(n)}.
	\]
	It needs to be pointed out that, the latter rate is slower than { that of } a constant environment with offspring distribution given by a Poisson with parameter 1. The reason for this is that the normalized second factorial moment in a constant environment (equivalently the variance) grows faster than in a varying environment as $n$ tends to infinity. In other words, in a constant environment we have that $\rho_{0,n}= var(Z_n)=\sigma^2 n$, while in this varying environment $\rho_{0,n} \sim \log(n)$ as $n\to \infty$.
\end{example}

\begin{example}[\textbf{Poisson distributions with $\mu_n$ decreasing at an exponential rate}] Let $Q=\{q_n:  n\geq 1\}$ be a sequence
	of Poisson distributions with parameters
	\begin{equation*}
		\lambda_1 = \exp(-1), \qquad \text{and}\qquad \lambda_n= \frac{\exp(-\sqrt{n})}{\exp(-\sqrt{n-1})}, \quad \text{for}\quad  n>1.
	\end{equation*}
	Observe that  $\lim_{n\to \infty}\lambda_{n}=1$, thus implying that  $f_{n}^{\prime}(1),\, f_{n}^{\prime\prime}(1)$ are bounded away from zero and infinity. %\nat{It readily follows that there exists a constant $C>0$ independent of $k$, such that 
		%	\begin{align*}
			%	\log(\rho_{0,k} \mu_k)+\log\left(f_k^{\prime}(1)\right)
			%	  &\leq C\log(\rho_{0,k} \mu_k).
			%	\end{align*} 
		%\texttt{It is }}	 
	Moreover  ~$\mu_n = \exp(-\sqrt{n})$ for $n\geq 0$ and by using L’Hopital’s rule we get
	\begin{equation*}
		\int_{0}^n \exp(\sqrt{x}) \mathrm{d} x \sim 2 \sqrt{n} \exp(\sqrt{n}), \quad \text{as} \quad n\to \infty. 
	\end{equation*}
	Thus ~$\rho_{0,n} \sim  2 \sqrt{n} \exp(\sqrt{n})$ as ~$n\to \infty$. Therefore, the  criticality condition \eqref{eq:critical} is satisfied,  and we are in a critical  region close to the subcritical behaviour. We observe that
	\begin{equation*}
		\begin{split}
			s_n &\leq C_1  \sum_{k=2}^{n-1} (\log(\rho_{0,k} \mu_k)+1) \big(  e^{\sqrt{k}} - e^{\sqrt{k-2}} \big)\leq C_2\sum_{k=2}^{n-1} \log(2\sqrt{k}) \big( e^{\sqrt{k}} - e^{\sqrt{k-2}} \big) \\
			&\leq C_3\log(2\sqrt{n-1}) e^{\sqrt{n-1}} \le  C_4 \log(2\sqrt{n}) \exp(\sqrt{n}),
		\end{split}
	\end{equation*}
	%	for some constant $C>0$ that might be adjust in each inequality.
	Therefore, from first part of the Corollary \ref{theo:speedYaglomzero} we obtain that 
	\begin{equation*}
		\begin{split}
			d_W\left(\left\{\left.\frac{1}{b_n}Z_n\ \right| Z_{n}>0\right\},\mathbf{e}\right)
			&\leq C_5 \left(   \frac{\log(\sqrt{n})}{\sqrt{n}} + \frac{\log(2\sqrt{n}) \exp(\sqrt{n})}{\sqrt{n}\exp(\sqrt{n})}\right)\leq C_6 \frac{\log(\sqrt{n})}{\sqrt{n}} 
			% \frac{\log(n)}{\sqrt{n}}.
		\end{split}
	\end{equation*}
	%for some constant $C^{\prime}>0$.
\end{example}

\begin{example}[\textbf{Linear fractional distributions}]\label{ex:linear}
	For each $n\geq 1$, we assume that $q_n$ is a linear fractional distribution, i.e. there exist  ~$p_n\in (0,1)$ and ~$a_n\in (0,1]$ such that 
	$$q_n(0) = 1-a_n \qquad \mbox{and } \qquad q_n (k) = a_n p_n (1-p_n)^{k-1}, \qquad k\geq 1. $$
	In this case, 
	$$
	f_n(s)=1- \frac{a_n(1-s)}{1-(1-p_n)s}, \quad s\in [0,1], \qquad f'_n(1) = \frac{a_n}{p_n},\ \qquad \nu_n =\frac{f_n''(1)}{f_n'(1)^2}= \frac{2(1-p_n)}{a_n}.
	$$
	
	According with \cite[Chapter 1]{kersting2017discrete}, $Z_n$ is again linear fractional with mean and normalized second factorial moment as follows 
	$$\mu_n = \frac{a_1\cdots a_n}{p_1\cdots p_n} \qquad \text{and}\qquad \rho_{0,n} = 2\sum_{k=0}^{n-1}\frac{p_1\dots p_k (1-p_{k+1})}{a_1\dots a_{k+1}}.$$
	Furthermore, ~$Z_n$ conditioned on $\{Z_n>0\}$ has a geometric distribution, i.e. for $k\geq 1$ 
	$$\mathbb{P}\big(Z_n = k \mid Z_n>0\big) = \widehat{p}_n (1-\widehat{p}_n)^{k-1}, \qquad \text{where} \qquad  \widehat{p}_n = \frac{2}{2 + \mu_n \rho_{0,n}}.$$
	Denote by $G_n$ this geometric distribution. Observe  that $b_n=\E[Z_n\mid Z_n>0]=(2 + \mu_n \rho_{0,n})/2$. 
	Assume that the GWVE is critical, that is $\mu_n$ and $\rho_{0,n}$ satisfy the criticality conditions  \eqref{cond:thirdmoment} and \eqref{eq:critical}. 	Thus, appealing to Theorem \ref{theo:pekoz}, we deduce 
	\begin{equation*}
		\begin{split}
			d_W\left(\left\{\left.\frac{2}{2 + \mu_n \rho_{0,n}}Z_n\ \right| Z_{n}>0\right\},\mathbf{e}\right) &  \leq 2\mathbb{E}\left[ \left|\frac{2G_n}{2 + \mu_n \rho_{0,n}}- \left(\frac{2 G_n}{2 + \mu_n \rho_{0,n}}\right)^e\right|\right] .
		\end{split}
	\end{equation*}
	According to \cite[Example 5.9]{MR2861132}, if $G_n$ is a geometric distribution then,  $G_n-U$ has  the equilibrium distribution of $G_n$, where $U\sim \text{Unif}[0,1]$ is  independent of $G_n$. Together with  the fact that 	 ~$(c\, G_n)^e \overset{(d)}{=} c\, G_n^e$  for any constant $c$, we deduce
	\begin{equation*}
		\begin{split}
			d_W\left(\left\{\left.\frac{2}{2 + \mu_n \rho_{0,n}}Z_n\ \right| Z_{n}>0\right\},\mathbf{e}\right)\leq    \frac{4}{2+\mu_n \rho_{0,n}}  \mathbb{E}[U] = \frac{4}{2+\mu_n \rho_{0,n}}.
		\end{split}
	\end{equation*} 
\end{example}

\section{Preliminary results} \label{sec:preliminaries}
Peköz and Röllin \cite{pekoz2011} obtained the explicit error bounds discussed in equation \eqref{eq:pekozthm} by an implementation of their abstract results from Stein's method in the context of branching processes. This was carried through the construction of some  couplings of the size-biased distribution of $Z_{n}$, which in turn incarnated couplings for the equilibrium distribution. The enhancement of these ideas with the major advances in the understanding of the size-biased distribution of a GWVE,  naturally suggests that their methodology has the potential of being successfully implemented in the varying environment regime. Throughout the rest of this paper, we will show that this is indeed possible.

First, we present some preliminaries on Stein's method for the exponential distribution and extend the results of Peköz and Röllin to distributions with finite mean. After that, we give the proof of Theorem \ref{thm:PekozRoellin}. Then, we present the size-biased distribution of $Z_n$ and we relate it  with a GWVE tree with one spine. We also provide a brief review of the shape function associated with a probability generating function.

\subsection{Stein's method}\label{sec:stein}
In this subsection we develop Stein’s method for bounding the Wasserstein distance between an exponential distribution and another variable.  We refer to the survey \cite{MR2861132} to explore the techniques of Stein’s method for distributional approximation involving normal, Poisson, exponential, and geometric distributions. Recall that  the Wasserstein distance, $d_W$, is defined as
\begin{align*}
	d_W  (\mu,\nu)
	&:=\sup_{f\in \mathcal{F}}\left|\int_{\R}f(x)\mu(\mathrm{d}x)-\int_{\R}f(x)\nu(\mathrm{d}x)\right|,
\end{align*}
where $\mathcal{F}=\{f:\R\rightarrow\R\ : f\mbox{ is Lipschitz and } \|f'\|\leq 1 \}$. 
In the field of distributional approximation, the problem of finding sharp bounds for quantities of the form $d_W(\mu,\nu)$, for probability measures $\mu$ and $\nu$, had its first developments within a perspective of Fourier analysis arguments. This approach followed the heuristic that the Fourier inversion formula could transfer information regarding the characteristic functions of $\mu$ and $\nu$, to information about the action of test functions over the distributions, ultimately leading to bounds for $d_W(\mu,\nu)$. This method 
remains remarkably useful nowadays in a variety of situations. However, the distributional approximation problem witnessed a major breakthrough with the paper \cite{MR0402873}, where Charles Stein proposed an alternative methodology based on the idea of expressing the difference between the actions of a test function $f$ over  $\mu$ and $\nu$ in the form 
\begin{align*}
	\int_{\R}f(x)\mu(dx) -\int_{\R}f(x)\nu(dx)
	&=\int_{\R}\mathcal{S}[f](x)\mu(dx),
\end{align*} 
where $\mathcal{S}[f]$ is a differential operator with the property that its annihilation over a suitable domain of test functions under the action of $\mu$, is equivalent to the identity $\mu=\nu$. The original analysis from Stein focuses exclusively on the case where $\nu$ is the standard Gaussian distribution. However, many improvements and developments have since been made, allowing the field's community to develop tools for incorporating other choices for $\nu$. In this direction, we would like to highlight the work on the exponential distribution by \cite{pekoz2011}, which serves as one of the main components of our proofs. 
We take as our starting point the Stein characterization of the exponential distribution, as given by the following lemma. For the proof, refer to, for example, \cite[Lemma 5.2]{MR2861132}.
\begin{lemma}
	A random variable $X$ has standard exponential distribution if and only if for every continuously differentiable test function $h:\R\rightarrow\R$ with bounded derivative, 
	\begin{align*}
		\E[\mathcal{A}[h](X)]
		&=0,
	\end{align*}
	where $\mathcal{A}$ denotes the operator 
	\begin{align*}
		\mathcal{A}[h](x)
		&:=h^{\prime}(x)-h(x)+h(0).
	\end{align*}
\end{lemma}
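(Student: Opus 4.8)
The plan is to verify both implications of the characterization directly, using the integration-by-parts structure of the exponential density on $[0,\infty)$. First I would prove the ``only if'' direction: assume $X$ is standard exponential, so it has density $e^{-x}\mathbf{1}_{\{x\ge 0\}}$. Then for a continuously differentiable $h$ with bounded derivative, I compute
\begin{align*}
	\E[h^{\prime}(X)]
	&=\int_{0}^{\infty}h^{\prime}(x)e^{-x}\,\mathrm{d}x
	=\big[h(x)e^{-x}\big]_{0}^{\infty}+\int_{0}^{\infty}h(x)e^{-x}\,\mathrm{d}x
	=-h(0)+\E[h(X)],
\end{align*}
where the boundary term at infinity vanishes because $h$ has bounded derivative (hence at most linear growth, while $e^{-x}$ decays). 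Rearranging gives $\E[h^{\prime}(X)-h(X)+h(0)]=0$, i.e. $\E[\mathcal{A}[h](X)]=0$.

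For the ``if'' direction, the standard Stein-method argument is to show that the identity $\E[\mathcal{A}[h](X)]=0$ for all admissible $h$ forces the law of $X$ to coincide with the exponential law. I would fix an arbitrary bounded Lipschitz test function $g$ (or, slightly more simply, a bounded continuous $g$, or an indicator $g=\mathbf{1}_{(-\infty,t]}$ after a smoothing argument) and solve the Stein equation
\begin{align*}
	h^{\prime}(x)-h(x)+h(0)=g(x)-\E[g(\mathbf{e})],
\end{align*}
exhibiting an explicit solution $h_g$ that is continuously differentiable with bounded derivative on the relevant range; the solution can be written as $h_g(x)=e^{x}\int_{x}^{\infty}e^{-u}\big(\E[g(\mathbf{e})]-g(u)\big)\,\mathrm{d}u$ for $x\ge 0$, extended appropriately to $x<0$ (here one uses that the exponential is supported on $[0,\infty)$, so one only needs control of $X$ on that set, or one checks that the identity plus boundedness forces $\mathbb{P}[X<0]=0$ first). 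Plugging $h_g$ into the hypothesis yields $\E[g(X)]=\E[g(\mathbf{e})]$ for all such $g$, and since this class of test functions is measure-determining, $X\stackrel{(d)}{=}\mathbf{e}$.

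The main obstacle is the regularity bookkeeping for the Stein equation solution $h_g$ in the converse direction: one must check that $h_g$ is indeed continuously differentiable with bounded derivative so that it is a legitimate test function to insert into the hypothesis, and one must handle the behaviour at $x=0$ and for $x<0$ cleanly (the operator $\mathcal{A}$ involves the boundary value $h(0)$, which is what encodes the support constraint). None of this is deep, but it is the only place where care is needed; the forward direction is a one-line integration by parts. Since the excerpt cites \cite[Lemma 5.2]{MR2861132} for the proof, I would in fact simply refer the reader there for the converse and only spell out the short forward computation if a self-contained argument is desired.
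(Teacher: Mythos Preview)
Your proposal is correct and in fact more detailed than what the paper does: the paper gives no proof at all, simply referring the reader to \cite[Lemma 5.2]{MR2861132}. The argument you sketch---integration by parts for the forward direction, solving the Stein equation for the converse---is exactly the standard proof found in that reference, and you correctly note at the end that citing it is what the paper opts for.
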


Let $\mathbf{e}$ be a standard exponential random variable, and let $X$ be a random variable, both defined on the same probability space. This lemma, suggests implementing Stein's method by finding a solution $h_f$ (depending on $f$) of the following equation
\begin{align}\label{eq:steinsol}
	f(x) - \mathbb{E}[f(\mathbf{e})]= h_f'(x)- h_f(x) + h_f(0). 
\end{align}
It is straightforward to prove that the previous equation has a unique solution that satisfies $h_f(0)=0$ (see \cite[Lemma 5.3]{MR2861132}).  By applying \eqref{eq:steinsol} to $X$,  we obtain 
\begin{align*}
	\E[f(X)]-\E[f(\mathbf{e})]
	&=\E[\mathcal{A}[h_f](X)].
\end{align*}

Therefore,  the problem is reduced to bound the quantity $\E[\mathcal{A}[h_f](X)]$. The first step to obtain  a sharp estimation on this quantity requires some knowledge on the regularity of the solution $h_f$ to \eqref{eq:steinsol}, we refer the reader to \cite[Lemma 4.1]{pekoz2011} or \cite[Lemma 5.3]{MR2861132}. With that estimation in hand, they obtained the following result, see \cite[Theorem 5.4]{MR2861132}.

\begin{theorem}\label{theo:5.4}
	Let $X$ be a non-negative random variable with $\mathbb{E}[X]<\infty$ and $\mathbf{e}$ be a standard exponential random variable.  Then,
	\begin{equation*}
		d_W  (X,\mathbf{e})
		\leq \sup_{f\in \mathcal{F}_W}\left|\mathbb{E}[f'(X)-f(X)]\right|,
	\end{equation*}	
	where $\mathcal{F}_W=\{f:\R\rightarrow \R \ : f(0)=0,\  \|f'\|\leq 1, \mbox{ and } \|f''\|\leq 2\}$.
\end{theorem}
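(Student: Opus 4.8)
The plan is to run Stein's method for the exponential distribution in the usual way: feed a test function $f\in\mathcal F$ into the Stein equation \eqref{eq:steinsol}, check that its solution $h_f$ lies in the class $\mathcal F_W$, and read off the bound. The only points requiring care are the integrability needed to take expectations against an only integrable $X$, and a reduction to smooth test functions so that the regularity estimates apply cleanly. First I would observe that it suffices to bound $\left|\E[f(X)]-\E[f(\mathbf e)]\right|$ for $f\in\mathcal F$ that is in addition continuously differentiable: given an arbitrary $1$-Lipschitz $f$, its mollifications $f_\varepsilon=f*\phi_\varepsilon$ are smooth, still $1$-Lipschitz, and converge to $f$ uniformly on $\R$, so $\E[f_\varepsilon(X)]\to\E[f(X)]$ and $\E[f_\varepsilon(\mathbf e)]\to\E[f(\mathbf e)]$ (all these expectations being well defined because $|f(x)|\le|f(0)|+|x|$ and $\E[X]<\infty$, and $\mathbf e$ has finite mean). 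Fix such an $f$. As recalled before the statement, \eqref{eq:steinsol} has a unique solution $h_f$ with $h_f(0)=0$, namely $h_f(x)=-e^{x}\int_x^{\infty}e^{-t}\big(f(t)-\E[f(\mathbf e)]\big)\,\mathrm d t$, the integral converging since $f$ grows at most linearly; differentiating \eqref{eq:steinsol} once shows $h_f$ is $C^1$ with Lipschitz derivative, hence an admissible test function.

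Next I would invoke the regularity estimates for the solution of the exponential Stein equation from \cite[Lemma 4.1]{pekoz2011} (see also \cite[Lemma 5.3]{MR2861132}): since $\|f'\|\le 1$, one has $h_f(0)=0$, $\|h_f'\|\le\|f'\|\le 1$ and $\|h_f''\|\le 2\|f'\|\le 2$ on $[0,\infty)$, so that $h_f\in\mathcal F_W$. Because $h_f$ is $1$-Lipschitz with $h_f(0)=0$ we have $|h_f(x)|\le|x|$, and since $X\ge 0$ with $\E[X]<\infty$ this gives $\E|h_f(X)|\le\E[X]<\infty$ and $\E|h_f'(X)|\le 1$. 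We may therefore evaluate \eqref{eq:steinsol} at $x=X$ and take expectations, obtaining
\begin{equation*}
\E[f(X)]-\E[f(\mathbf e)]=\E\!\left[\mathcal A[h_f](X)\right]=\E\!\left[h_f'(X)-h_f(X)\right],
\end{equation*}
where the last equality uses $h_f(0)=0$.

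Since $h_f\in\mathcal F_W$, the right-hand side is, in absolute value, at most $\sup_{g\in\mathcal F_W}\left|\E[g'(X)-g(X)]\right|$; taking the supremum over $f\in\mathcal F$ (equivalently, over the dense subclass of smooth such $f$) yields $d_W(X,\mathbf e)\le\sup_{g\in\mathcal F_W}\left|\E[g'(X)-g(X)]\right|$, as claimed. The substantive input is the cited pair of bounds $\|h_f'\|\le\|f'\|$ and $\|h_f''\|\le 2\|f'\|$ on the half-line, which is exactly where the hypothesis $X\ge 0$ enters (these bounds need not hold for $x<0$, but $X$ never visits there); the rest is routine, and I expect the main thing to be careful about to be confirming $h_f\in\mathcal F_W$ and justifying that the Stein identity may be integrated against the law of $X$ under only a first-moment assumption.
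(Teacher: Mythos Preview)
Your argument is correct and is exactly the standard route: solve the Stein equation, invoke the regularity bounds $\|h_f'\|\le\|f'\|$ and $\|h_f''\|\le 2\|f'\|$ from \cite[Lemma~4.1]{pekoz2011}/\cite[Lemma~5.3]{MR2861132}, and conclude. Note, however, that the paper does not supply its own proof of this theorem; it simply quotes the result from \cite[Theorem~5.4]{MR2861132}, so there is nothing further to compare against---your write-up is essentially a reconstruction of the proof in that reference.
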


Considering the shape of the error, it becomes necessary to identify a structure that facilitates the comparison between $\mathbb{E}[f(X)]$ and $\mathbb{E}[f'(X)]$ for some $f$, thereby obtaining a bound for the distance between $X$ and $\mathbf{e}$. We introduce the following definition.

\begin{definition}
	Let $X$ be a non-negative random variable with a positive finite mean. We say that  $X^e$ has the \textit{equilibrium distribution} with respect to $X$, if for every Lipschitz function $f:\R\rightarrow\R$, 
	\begin{equation*}
		\mathbb{E}[f^{\prime}(X^e)]=\frac{	\mathbb{E}\big[f(X)-f(0)\big]}{\mathbb{E}[X]}.
	\end{equation*}
\end{definition}

It is not clear whether an equilibrium distribution for $X$ exists. However, $X^e$ can 
be related to a uniform random variable and the size-biased distribution of $X$. We recall that a random variable $\dot{X}$ has the \textit{size-biased distribution} of $X$ if, for all bounded or Lipschitz 
functions $f$,
\begin{equation*}
	\mathbb{E}\big[f(\dot{X})\big] = \frac{\mathbb{E}[Xf(X)]}{ \mathbb{E}[X]}.
\end{equation*}
The existence of a size-biased distribution is directly guaranteed by the Radon-Nikodym Theorem. Let $U$ be a uniformly distributed random variable over $[0,1]$ and independent of $\dot{X}$. For every $x\in \R$, the Fundamental Theorem of Calculus allows us to express $f(x)-f(0)=\E[xf^{\prime}(xU)]$. Then, by independence, 
\begin{align*}
	\E[f(X)-f(0)]
	&=\E[Xf^{\prime}(XU)]=\E[X]\E[f^{\prime}(\dot{X}U)].
\end{align*} 
This proves the following lemma.
\begin{lemma}\label{lem:sizebiastoequilibrium}
	Let $\dot{X}$ be the size-biased distribution of $X$ and $U$ a Uniform $(0,1)$ random variable independent of $\dot{X}$. Then, ~$U\dot{X}$ has the equilibrium distribution of $X$.
\end{lemma}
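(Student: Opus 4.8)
The plan is to verify directly that the candidate variable $U\dot X$ satisfies the defining identity of the equilibrium distribution, namely $\E[f'(U\dot X)]=\E[f(X)-f(0)]/\E[X]$ for every Lipschitz $f:\R\to\R$. The two ingredients are the Fundamental Theorem of Calculus, which produces the uniform factor, and the definition of the size-biased law, which produces $\dot X$. First I would fix a Lipschitz $f$; since $f$ is absolutely continuous with bounded a.e.\ derivative and $X$ has finite mean, every expectation below is finite. Writing the increment of $f$ as an integral and substituting $t=xu$ gives, for all $x\in\R$,
\[
f(x)-f(0)=\int_0^x f'(t)\,\mathrm{d}t=x\int_0^1 f'(xu)\,\mathrm{d}u=\E\!\left[x f'(xU)\right],
\]
where $U\sim\mathrm{Unif}[0,1]$; for $x=0$ both sides vanish.

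Next I would substitute $x=X$ and take expectations, with $U$ taken independent of $X$ (equivalently of $\dot X$). Fubini's theorem then gives $\E[f(X)-f(0)]=\E[Xf'(XU)]$. Conditioning on $U$, for each fixed $u\in[0,1]$ the definition of the size-biased distribution applied to the bounded measurable function $y\mapsto f'(yu)$ yields $\E[Xf'(Xu)]=\E[X]\,\E[f'(\dot X u)]$; integrating this identity in $u$ and using that $U$ is independent of $\dot X$ gives $\E[Xf'(XU)]=\E[X]\,\E[f'(\dot X U)]$. Dividing by $\E[X]>0$ produces exactly the equilibrium identity, so $U\dot X$ has the equilibrium distribution of $X$.

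There is no substantive obstacle here: the computation is a single application of the Fundamental Theorem of Calculus combined with the size-bias identity. The only points that merit a line of care are the routine ones — integrability, which follows from the Lipschitz bound $|f(x)|\le|f(0)|+\|f'\|\,|x|$ together with $\E[X]<\infty$, and the fact that $\dot X>0$ almost surely (size-biasing removes any atom of $X$ at $0$), so that $U\dot X$ has a density and $\E[f'(U\dot X)]$ does not depend on the choice of a.e.\ representative of $f'$; this also legitimizes the Fubini exchange and the conditioning on $U$.
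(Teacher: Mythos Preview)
Your proof is correct and follows essentially the same route as the paper: use the Fundamental Theorem of Calculus to write $f(x)-f(0)=\E[xf'(xU)]$, take expectations, and apply the size-bias identity to pass from $\E[Xf'(XU)]$ to $\E[X]\,\E[f'(\dot X U)]$. The additional care you take with integrability, Fubini, and the a.e.\ representative of $f'$ is welcome but does not change the underlying argument.
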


Now that we know the existence of the equilibrium distribution, we are ready to bound the Wasserstein distance of $X$ and $\mathbf{e}$. 

\begin{theorem}\label{theo:pekoz}
	Let $X$ be a non-negative random variable such that ~$\mathbb{E}[X]=m$ and ~$\mathbb{E}[X^2]<\infty$. Let $X^e$  be the  equilibrium distribution of $X$ and $\mathbf{e}$ be a standard exponential random variable. Then, 
	\begin{equation*}
		d_W\left(X,\mathbf{e}\right) \leq 2\mathbb{E}\big[|X-X^e|\big]+ |m-1|.
	\end{equation*}
\end{theorem}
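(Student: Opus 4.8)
The plan is to reduce $d_W(X,\mathbf{e})$ to a comparison between $X$ and its equilibrium distribution $X^e$ via Theorem~\ref{theo:5.4}, and then handle the mean defect $|m-1|$ as a separate additive term. First I would invoke Theorem~\ref{theo:5.4}: since $X$ is non-negative with finite mean (indeed finite second moment), we have $d_W(X,\mathbf{e})\le \sup_{f\in\mathcal{F}_W}|\E[f'(X)-f(X)]|$, where $\mathcal{F}_W$ consists of $f$ with $f(0)=0$, $\|f'\|\le1$, $\|f''\|\le2$. So fix such an $f$ and estimate $|\E[f'(X)-f(X)]|$.

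The key algebraic step is to insert the equilibrium identity. By definition of $X^e$, for the Lipschitz function $f$ we have $\E[f'(X^e)] = \E[f(X)-f(0)]/\E[X] = \E[f(X)]/m$ (using $f(0)=0$). Hence $\E[f(X)] = m\,\E[f'(X^e)]$, and therefore
\begin{align*}
	\E[f'(X)-f(X)]
	&= \E[f'(X)] - m\,\E[f'(X^e)]\\
	&= \big(\E[f'(X)] - \E[f'(X^e)]\big) + (1-m)\,\E[f'(X^e)].
\end{align*}
For the first bracket, since $f'$ is Lipschitz with constant $\|f''\|\le 2$, we get $|\E[f'(X)]-\E[f'(X^e)]| \le 2\,\E[|X-X^e|]$ — strictly speaking one should take $X,X^e$ on a common probability space, or equivalently note that $|\E[f'(X)]-\E[f'(X^e)]|\le 2\,d_W(X,X^e)\le 2\,\E[|X-X^e|]$ for any coupling, so one may pass to the infimum over couplings; but since the statement is phrased with a fixed $X^e$, the bound $2\,\E[|X-X^e|]$ is what is claimed and holds for whatever joint law is given. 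For the second term, $|f'|\le 1$ forces $|\E[f'(X^e)]|\le 1$, so $|(1-m)\E[f'(X^e)]|\le |m-1|$. Combining, $|\E[f'(X)-f(X)]|\le 2\,\E[|X-X^e|] + |m-1|$ uniformly in $f\in\mathcal{F}_W$, and taking the supremum gives the claim.

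The main (minor) obstacle is bookkeeping around the coupling of $X$ and $X^e$: the quantity $\E[|X-X^e|]$ only makes sense once a joint distribution is specified, and the clean bound $|\E[f'(X)]-\E[f'(X^e)]|\le 2\E[|X-X^e|]$ is the Lipschitz estimate on $f'$ applied pointwise to that joint law. One should also note that $\E[X^2]<\infty$ guarantees $\E[X^e]<\infty$ (indeed $\E[X^e]=\E[X^2]/(2m)$ by taking $f(x)=x^2/2$ in the equilibrium identity), so all expectations above are finite and the manipulations are legitimate; this is exactly where the second-moment hypothesis is used. Everything else is a direct substitution, so no further difficulty is expected.
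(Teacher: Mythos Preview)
Your proposal is correct and matches the paper's proof essentially line for line: both invoke Theorem~\ref{theo:5.4}, use the equilibrium identity $\E[f(X)]=m\,\E[f'(X^e)]$ (from $f(0)=0$), split $\E[f'(X)-f(X)]$ into $\E[f'(X)-f'(X^e)]+(1-m)\E[f'(X^e)]$, and bound the two pieces by $\|f''\|\,\E[|X-X^e|]$ and $|1-m|\,\|f'\|$ respectively. Your additional remarks on the coupling of $X,X^e$ and on the role of the second-moment hypothesis are correct clarifications that the paper leaves implicit.
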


\begin{proof}
	Let $f\in \mathcal{F}_W$. Since $f(0)=0$ we have $m \mathbb{E}[f^{\prime}(X^e)]=\mathbb{E}[f(X)]$. Therefore,
	\begin{equation*}
		\begin{split}
			\left|\mathbb{E}[f'(X)-f(X)]\right|&\leq \left|\mathbb{E}[f'(X)-f'(X^e)]\right|+|1-m|\ |\mathbb{E}[f'(X^e)]|\\
			&\leq \|f''\|\ \mathbb{E}[|X-X^e|]+|1-m| \ \|f'\|.
		\end{split}
	\end{equation*}
	The result holds  by taking supremum  over $f\in \mathcal{F}_W$ and Theorem \ref{theo:5.4}. 
\end{proof}	

The previous theorem was proved by  Peköz and Röllin under the assumptions  
$\mathbb{E}[X]=1$ and $\mathbb{E}[X^2]<\infty$, as shown in \cite[Theorem 2.1]{pekoz2011}, and another similar proof can be found in \cite[Lemma 5.6]{MR2861132}. From the previous result and Lemma \ref{lem:sizebiastoequilibrium}, we obtain the following bound in terms of the size-biased distribution:
\begin{equation}\label{eq: bound sizebiased}
	d_W\left(X,\mathbf{e}\right) \leq 2\mathbb{E}\big[|X-U\dot{X}|\big]+ |m-1|.
\end{equation}

The above bound shifts the focus of the problem towards the description of the size-biased distribution of $Z_n$  conditioned on $\{Z_n>0\}$. In this regard, let 
$\tilde{X}$ denote the law of $X$ conditioned on the event $\{ X>0\}$. Using the fact that every measurable and bounded function  $f:\R\rightarrow\R$ satisfies 
\begin{align*}
	\E[f(\dot{X})]
	&=\frac{\E[Xf(X)]}{\E[X]}=\frac{\E[Xf(X)\mid X>0]}{\E[X\mid X>0]}=\frac{\E[\tilde{X}f(\tilde{X})]}{\E[\tilde{X}]},
\end{align*}
we deduce the following simple but useful result.
\begin{lemma}\label{lem:conditionedbias}
	Let $X$ be a non-negative random variable with finite mean and let $Y$ be $X$ conditioned on $\{ X>0\}$. Then the size-biased distributions of $X$ and $Y$ coincide.
\end{lemma}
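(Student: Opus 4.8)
The plan is to verify the claim directly from the defining identity of the size-biased distribution, namely to check that $\E[f(\dot{X})]=\E[f(\dot{Y})]$ for every bounded measurable $f:\R\to\R$. Since, by the Radon--Nikodym construction recalled above, the size-biased law of a non-negative random variable with positive finite mean is uniquely characterised by the collection of these integrals, establishing this equality for all such $f$ is enough to conclude that the two size-biased distributions coincide.

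First I would note that, because $X\ge 0$, the product $Xf(X)$ vanishes on $\{X=0\}$ irrespective of the value $f(0)$, so that $\E[Xf(X)]=\E[Xf(X)\Indi{\{X>0\}}]$ and likewise $\E[X]=\E[X\Indi{\{X>0\}}]$. The standing hypothesis that $X$ has positive finite mean forces $\mathbb{P}[X>0]>0$, so dividing numerator and denominator of the ratio $\E[Xf(X)]/\E[X]$ by $\mathbb{P}[X>0]$ is legitimate and yields
\[
	\E[f(\dot{X})]=\frac{\E[Xf(X)]}{\E[X]}=\frac{\E[Xf(X)\mid X>0]}{\E[X\mid X>0]}.
\]
Then I would invoke the definition of $Y$ as $X$ conditioned on $\{X>0\}$, which gives $\E[Xf(X)\mid X>0]=\E[Yf(Y)]$ and $\E[X\mid X>0]=\E[Y]$ (in particular $0<\E[Y]<\infty$, so that $\dot{Y}$ is well defined). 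Substituting back produces $\E[f(\dot{X})]=\E[Yf(Y)]/\E[Y]=\E[f(\dot{Y})]$, which is precisely the asserted equality.

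There is essentially no real obstacle here: the argument is the short computation already displayed immediately before the statement, reorganised as above, and the only point deserving a word of care is the degenerate case, i.e. ensuring that conditioning on $\{X>0\}$ and size-biasing are both well defined, which is automatic once one observes that $0<\E[X]<\infty$ entails $\mathbb{P}[X>0]>0$.
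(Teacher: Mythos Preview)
Your proposal is correct and is essentially identical to the paper's own argument, which consists of the one-line computation $\E[f(\dot{X})]=\E[Xf(X)]/\E[X]=\E[Xf(X)\mid X>0]/\E[X\mid X>0]=\E[\tilde{X}f(\tilde{X})]/\E[\tilde{X}]$ displayed immediately before the lemma. You have simply spelled out the justification for each equality (in particular that $Xf(X)$ vanishes on $\{X=0\}$ and that $\mathbb{P}[X>0]>0$), which the paper leaves implicit.
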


\subsection{Rates on Yaglom's limit in a constant environment}
\label{proof constant}

%We fix this omission by
In this section, we show Theorem \ref{thm:PekozRoellin}. The proof basically follows  the same arguments as those used in the proof of  \cite[Theorem 3.3]{pekoz2011}, with a slight modification. We use their notation and encourage  the reader to read our proof alongside theirs for a better understanding. 

\begin{proof}[Proof of Theorem  \ref{thm:PekozRoellin}]
	Let $\{Z_n:n\geq 0\}$ be  a critical Galton-Watson process with constant environment such that $\sigma^2<\infty$ and $\E[(Z_1)^3]<\infty$. In what follows, $C_1, C_2, \dots$ are strictly positive constants independents of $n$. We begin by showing the first bound. We define the random variables $R_n^*, R_n, U$ as in \cite[Theorem 3.3]{pekoz2011}, as well as  
	\[W := \frac{2}{\sigma^2 n} R_n^* \qquad \text{and}\qquad W^e := \frac{2}{\sigma^2 n} (R_n - U).\]
	From \cite[Theorem 3.3]{pekoz2011}, we notice  that $W$ and $\{2(\sigma^2n)^{-1}Z_n\mid Z_n>0\}$ have the same law and 
	\[\mathbb{E}\big[|W- W^e|\big] \le C_1 \frac{\log (n)}{n}.\]
	Further, from \eqref{eq:kolmesti} we obtain that 
	\begin{equation*}
		\left|\E\left[\left.\frac{2}{\sigma^2n}Z_n\ \right| Z_{n}>0\right] - 1\right| \le C_2 \frac{{\log(n)}}{n}.
	\end{equation*}
	With this in hand, the claim follows from Theorem \ref{theo:pekoz}.  Indeed, 
	\begin{equation*}
		d_W\left(\left\{\left.\frac{2}{\sigma^2n}Z_n\ \right| Z_{n}>0\right\}, \mathbf{e}\right) \leq 2\mathbb{E}\big[|W-W^e|\big]+	\left|\E\left[\left.\frac{2}{\sigma^2n}Z_n\ \right| Z_{n}>0\right] - 1\right| \leq C_3 \frac{{\log(n)}}{n}.
	\end{equation*}
	
	For the second bound, we make the following modification in  the proof of \cite[Theorem 3.3]{pekoz2011}. The random variables $W$  and $W^e$  are now defined as follows
	\[W:= \mathbb{P}[Z_n>0] R_n^* \qquad \text{and} \qquad  W^e := \mathbb{P}[Z_n>0](R_n-U).\]
	Then, the claims (i)-(vii) about $R_n$ and $R_n^*$ in the proof of  \cite[Theorem 3.3]{pekoz2011}, imply that 
	\[\frac{1}{\mathbb{P}[Z_n>0]} \mathbb{E}\big[|W- W^e|\big] \le C_5  \log (n).\]
	Now, appealing to Kolmogorov's estimate \eqref{eq: kolmogorov estimate}, %(see e.g. \cite[Theorem C(i)]{MR1349164}), 
	we have that  $\lim_{n\to \infty} n\mathbb{P}[Z_n>0] = 2/\sigma^2$, so
	\[\mathbb{E}\big[|W- W^e|\big] \le C_6  \frac{\log (n)}{n}.\]
	Since $\mathbb{E}[ \{\mathbb{P}[Z_n>0]~ Z_n \mid Z_n>0\} ]=1$, we can apply \cite[Theorem 2.1]{pekoz2011} (or equivalently Theorem \ref{theo:pekoz}) to conclude the desired bound. 
\end{proof}

\subsection{The size-biased distribution of $Z_{n}$}\label{sec:sizebiasdist}
In this subsection, we examine the size-biased process $\dot{Z}:=\{\dot{Z}_n:n\geq 0\}$
and establish a connection between $Z_n$ 
and the population size at generation $n$ of a  random tree. The material presented here is comprehensively explained in \cite[Sections 1.4.1 and 1.4.2]{kersting2017discrete} and \cite[Section 2 and 3]{cardona2021yaglom}.  
We will only present the key ideas to enhance the readability of our manuscript and ensure its self-contained nature.

We  recall that there exists a relationship between Galton-Watson processes  and Galton-Watson trees, both of which have a varying environment 
$Q=\{q_n:  n\geq 1\}$.  Specifically, 
$Z_n$ represents the number of particles at generation $n$ of the random tree constructed as 
follows: any particle in generation $i$ gives birth to particles in generation $i + 1$ with distribution $q_{i+1}$.

For this purpose, we introduce a labelling of particles, known as the {\it Ulam-Harris labelling}, which directly reveals ancestral relationships. In this approach, particles are identified by elements $u$ in $\mathcal{U}$,   the set of finite sequences of strictly positive integers, including  $\emptyset$.  For  $ u\in \mathcal{U}$, we define the length of  $u$ as   $|u|:=n$, if  $u=u_1\cdots u_n$, where $n\geq 1$  and  $|\emptyset|:=0$ if $u=\emptyset$.   If  $u$  and  $v$  are two elements in  $\mathcal{U}$, we denote their concatenation as  $uv$, with the convention  that  $uv=u$  if  $v=\emptyset$.  The genealogical line of  $u$  is denoted by  $[\emptyset,u]=\{\emptyset\}\cup\{u_1\cdots u_j: j=1,\ldots ,n\}$.

A {\it rooted tree}  $\textbf{t}$  is a subset of  $\mathcal{U}$  that satisfies   $\emptyset\in\textbf{t}$,  $[\emptyset, u]\subset\textbf{t}$ for any $u\in\textbf{t}$,  and if  $u\in \textbf{t}$  and  $i\in\mathbb{N}$  satisfy  $ui\in\textbf{t}$,  then  $uj\in\textbf{t}$  for all  $1
\leq j\leq i$. We  denote by  $\mathcal{T} = \{\textbf{t}: \textbf{t}\ \mbox{is a tree}\}$ the subspace of rooted trees.  The vertex  $\emptyset$  is called {\it the root} of the tree. For any  $u\in \textbf{t}$,  we define the number of children of $u$ as $l_u(\textbf{t})=\max\{i\in\mathbb{Z}^+: ui\in\textbf{t}\}$.  The height of  $\textbf{t}$  is defined by $|\textbf{t}|= \sup\{|u|: u \in \textbf{t}\}$.  For any  $n\in \mathbb{N}$  and trees $\textbf{t}$ and $ \tilde{\textbf{t}}$, we write  $\textbf{t}\overset{n}{=}\tilde{\textbf{t}}$  if they coincide up to height $n$. The population size in the  $n$-th generation of the tree  $\textbf{t}$  is denoted by  $X_n(\textbf{t})=\#\{u\in\textbf{t}: |u|=n\}$.

A {\it Galton-Watson tree in a environment} $Q=\{q_n:  n\geq 1\}$  is a  $\mathcal{T}$-valued random variable  $\textbf{T}$  such that 
\begin{equation}\label{eq: Galton watson tree}
	\mathbb{P}[\textbf{T}\overset{n}{=}\textbf{t}] = \prod_{u\in \textbf{t}:\, |u|<n}  q_{|u|+1}[l_u(\textbf{t})],
\end{equation}
for any  $n\geq 0$  and any tree  $\textbf{t}$. As mentioned before, the process  $Z=\{Z_n:n\geq 0\}$  defined as  $Z_n=X_n(\textbf{T})$  is a Galton-Watson process in a environment $Q$.

In a similar way,  $\dot{Z}_n$\ is the population size at generation $n$ of some random tree.  
According to \cite[Sections 1.4.1 and 1.4.2]{kersting2017discrete}, the tree associated with $\dot{Z}$ is a size-biased tree in a varying environment $Q$. The {\it size-biased tree in a varying environment} $Q$\ is constructed similarly to a Galton-Watson tree, but
it incorporates additional information regarding a \textit{marked particle} that is distinguished from the rest. This marked particle is ultimately forced to fit the definition of $\dot{Z}_{n}$. More precisely,  let  $\dot{q}_i$ be the size-biased distribution of $q_i$, for each $i\geq 1$. We define  a size-biased tree in a varying environment $Q$ as follows:
\begin{enumerate}
	\item[(i)] We start with an initial marked particle.
	\item[(ii)] The marked particle in generation $i \geq 0$, gives birth to particles in generation $i+1$ according to $\dot{q}_{i+1}$.  Uniformly over the progeny, we select one of the individuals and classify it as a marked particle. The remaining members of the offspring remain unmarked.
	\item[(iii)] Any unmarked particle in generation $i \geq 0$, gives birth to unmarked particles in generation $i+1$ according to $q_{i+1}$, independently of other particles.
\end{enumerate}

The marked genealogical line is known as the {\it spine}. This construction is recognized as the {\it one-spine decomposition} of the tree. Now, we associate a probability measure in the set of rooted trees to the size-biased tree.  First, we require a probability measure on the set of rooted trees with one spine. 
A {\it spine} or distinguished path $\textbf{v}$ on a tree $\textbf{t}$ is a sequence  $\{v^{(k)}: k=0,1,\ldots,|\textbf{t}|\}\subset \textbf{t}$ (or $\{v^{(k)}: k=0,1,\ldots\}\subset \textbf{t}$ if $|\textbf{t}|=\infty$) such that $v^{(0)}=\emptyset$ and  $v^{(k)}=v^{(k-1)}j$ for some $j\in\mathbb{N}$, for any $1\leq k\leq |\textbf{t}|$.  We denote by $\dot{\mathcal{T}}$, the {\it subspace of trees with one spine} given by 
$$\dot{\mathcal{T}}= \{(\textbf{t},\textbf{v}):\ \textbf{t}\ \mbox{is a tree and } \textbf{v}\ \mbox{is a spine on}\ \textbf{t} \},$$ 
and by $\mathcal{T}_n=\{\textbf{t}\in \mathcal{T}: |\textbf{t}|=n\}$ and $\dot{\mathcal{T}}_n=\{(\textbf{t},\textbf{v})\in \dot{\mathcal{T}}: |\textbf{t}|=n\}$ the restriction of $\mathcal{T}$ and $\dot{\mathcal{T}}$ to trees with height $n$.  Then, the size-biased tree with one spine can be seen as a $\dot{\mathcal{T}}$-valued random variable $ (\dot{\textbf{T}},\textbf{V})$ with distribution
\begin{equation}\label{eq:lawtv}
	\begin{split}
		\mathbb{P}[(\dot{\textbf{T}},\textbf{V})\overset{n}{=}(\textbf{t},\textbf{v})] :&= \prod_{u\in \textbf{v}:\, |u|<n}  \dot{q}_{|u|+1}[l_u(\textbf{t})]\frac{1}{l_u(\textbf{t})}\prod_{u\in \textbf{t}\setminus\textbf{v}:\, |u|<n}  q_{|u|+1}[l_u(\textbf{t})]\\
		&= \prod_{u\in \textbf{v}:\, |u|<n}  \frac{q_{|u|+1}[l_u(\textbf{t})]}{f'_{|u|+1}(1)}\prod_{u\in \textbf{t}\setminus\textbf{v}:\, |u|<n}  q_{|u|+1}[l_u(\textbf{t})]\\
		&=\ \frac{1}{\mu_n}\mathbb{P}[\textbf{T}\overset{n}{=}\textbf{t}] 
	\end{split}
\end{equation}

for any $n\geq 0$ and  $(\textbf{t},\textbf{v})\in\dot{\mathcal{T}}_n$, where we have used the definition of $\dot{q}$, \eqref{eq:mu} and  \eqref{eq: Galton watson tree}. 
Hence, by summing over all the possible spines, we obtain that  the distribution of the {\it size-biased Galton-Watson  tree  in an environment} $Q$ on $\mathcal{T}$ is given by 
\begin{equation}\label{eq:lawt}
	\mathbb{P}[\dot{\textbf{T}}\overset{n}{=}\textbf{t}]=  \sum_{\textbf{v}: (\textbf{t},\textbf{v})\in\dot{\mathcal{T}}_n} \mathbb{P}[(\dot{\textbf{T}},\textbf{V})\overset{n}{=}(\textbf{t},\textbf{v})] =  \frac{1}{\mu_n} X_n(\textbf{t})\mathbb{P}[\textbf{T}\overset{n}{=}\textbf{t}] ,
\end{equation}
for any $n\geq 0$ and any $\textbf{t}\in\mathcal{T}_n$  (see also \cite[Lemma 1.2]{kersting2017discrete}). Let    $\dot{Z}=\{\dot{Z}_n: n\geq 0 \}$ be the process defined as  $\dot{Z}_n=X_n(\dot{\textbf{T}})$, for each $n\geq 0$. 
By the previous equation, we can see that $\dot{Z}_n$ is  the size-biased distribution of $Z_n$ for each $n\geq 0$.

Lemma \ref{lem:conditionedbias} tells us that $\dot{Z}_n$ is also the size-biased distribution of $Z_n$ conditioned on $\{Z_n>0\}$, which implies that $\dot{Z}_n>0$ for all $n\geq 0$ and 
$|\dot{\textbf{T}}|=\infty$. Let $\textbf{V}=\{V^{(k)}: k=0,1,\ldots\}$ be the associated spine.  For each $n\geq 0$, denote by $L_n$ and $R_n$ the number of particles at the left (excluding $V^{(n)}$) and at the right (including $V^{(n)}$), respectively, of particle $V^{(n)}$ in $\dot{\textbf{T}}$. Then, $$\dot{Z}_n=L_n+R_n, \qquad n\geq 0.$$
We can classify these particles according to the generation at which they split off from the spine. More precisely,  for each ~$j \in \{1,\dots,n\}$, denote by ~$\dot{Z}_{n,j}$ the number of particles at generation $n$  in $\dot{\textbf{T}}$ that are descendants of the siblings of  $V^{(j)}$ but not $V^{(j)}$ itself. In the same spirit, denote by ~$L_{n,j}$~  and ~$R_{n,j}$~ the number of particles at generation $n$ in $\dot{\textbf{T}}$ that stem from the siblings to the left and right, respectively, of  $V^{(j)}$ (both excluding $V^{(j)}$).   
Note that for fixed ~$j  \in \{1,\dots,n\}$, the random variables ~$L_{n,j}$ and ~$R_{n,j}$ are in general not independent, as they are linked through the offspring number of $V^{(j-1)}$. However, by the branching property, the couples $(L_{n,1}, R_{n,1}), \dots ,(L_{n,n}, R_{n,n})$ are independent. For each $n\geq 1$, we have the following 
\begin{equation}\label{eq: Lnj, Rnj}
	L_n= \sum_{j=1}^{n} L_{n,j}, \qquad  R_n = 1+ \sum_{j=1}^{n} R_{n,j}\qquad \text{and}\qquad  	\dot{Z}_{n}=1+\sum_{j=1}^{n}(L_{n,j}+ R_{n,j}).
\end{equation}

\subsection{The shape function}\label{sec: shape}
Recall that  $f_n$ denotes the generating function associated with $q_n$. 
For each $0\leq m< n$ and $s\in [0,1]$, we define 
$ f_{m,n}(s):= [f_{m+1} \circ \cdots \circ f_n](s)$ and  $f_{n,n}(s):=s$. According to  equation \eqref{laplace Z}, the generating function of  $Z_n$  is equal to  $f_{0,n}$. 
An essential part of our proof is to understand 
$\{f_{0,n}: n\geq 1\}$. To handle such iterated compositions of generating functions, we utilize the shape function, a common device in the theory of branching processes. For a generating function $f$, 
we define the shape function $\varphi:[0,1)\rightarrow \mathbb{R}$ associated with $f$ as   
\begin{equation*}
	\varphi(s): = \frac{1}{1-f(s)}-\frac{1}{f'(1)(1-s)} , \quad 0\leq s < 1.
\end{equation*} 
Due to the convexity of $f$,  the function $\varphi$ is non-negative. Moreover,  by Taylor expansion of $f$ around 1, one can extend the definition of $\varphi$ to 1 as 
\begin{equation*}
	\varphi(1) :=\lim\limits_{s\uparrow 1} \varphi(s)= \frac{f''(1)}{2f'(1)^2}.
\end{equation*}
 \cite{kersting2020} showed in Lemma 1 and 6 that  under condition \eqref{cond:Kersting}, there exists a $C>0$ such that  
\begin{equation}\label{eq:bound varphi}
	C\varphi_l(1)\leq \varphi_l(s)\leq \varphi_l(1), \qquad \mbox{ for every } s\in[0,1],\ l\geq 1,
\end{equation}
where $\varphi_l$ is the shape function of $f_l$. 
According to \cite[Lemma 5]{kersting2020}, for every $0\leq k<n$, the shape function $\varphi_{k,n}$ of $f_{k,n}$ is  given by
$$\varphi_{k,n}(s):=\mu_k\underset{l=k}{\overset{n-1}{\sum}}\ \frac{\varphi_l(f_{l,n}(s))}{\mu_{l-1}}, \qquad \qquad s\in[0,1).$$
For further details about the shape function see for instance \cite[Section 3]{kersting2020}.

\section{Proofs of main theorems}\label{sec:proofs}
In this section, we prove our main results. Let $Q$ be {a} varying environment such that conditions \eqref{cond:thirdmoment} and \eqref{eq:critical} are satisfied. In the sequel, $C$ will denote a generic constant independent of $n$, whose value might vary from one line to another. Denote by $Y_n$ the law of $Z_n$ conditioned on $\{Z_n>0\}$ and let $b_n:=\E[Y_n]=\E[Z_n\mid Z_n>0]$ its mean. By equation  \eqref{eq: bound sizebiased},
we have 	
\begin{equation}\label{eq: bound Y_n}
	d_W\left(\frac{Y_n}{b_n},\mathbf{e}\right) \leq 2\mathbb{E}\left[\left|\frac{Y_n}{b_n}-U\frac{\dot{Y}_n}{b_n}\right|\right]=\frac{2}{b_n}\mathbb{E}\left[\left|Y_n-U\dot{Y}_n\right|\right],
\end{equation}
where $\dot{Y}_n$ is the size biased distribution of $Y_n$.
According to Lemma \ref{lem:conditionedbias}, $\dot{Y}_n$ coincides with  $\dot{Z}_n$, the size-biased distribution of $Z_{n}$. Therefore,  in order to  have a bound, it is natural to embed $Y_n$ and $U\dot{Y}_n$ into the size-biased tree $\dot{\textbf{T}}$ in a varying environment $Q$.

We start with the embedding  of $U\dot{Y}_n$. Let $\textbf{V}$ {be} the associated spine. According to \eqref{eq:lawtv} and \eqref{eq:lawt}, $V^{(n)}$ is {selected uniformly} between the individuals at generation $n$. This implies that    $R_n$, the number of particles at the right (including) of particle $V^{(n)}$ in $\dot{\textbf{T}}$, is uniform in $\{1,\dots, \dot{Z}_n\}$.  This discrete uniformity can be adjusted by an  additive continuous uniform variable.  Let $U$ be a Uniform$[0,1]$  random variable independent of the Galton-Watson tree. We have the following identity 
\begin{equation}\label{eq:equiliY}
	U \dot{Y}_n = U \dot{Z}_n\overset{(d)}{=} R_n - U=  1-U+ \sum_{j=1}^{n} R_{n,j},
\end{equation}
where we have used \eqref{eq: Lnj, Rnj}.

Now, we will construct {an embedding} of $Y_n$ into $\dot{\textbf{T}}$. Recall $\{Z_n\mid Z_n>0\}$ is the  population size at generation $n$ of a Galton-Watson tree with alive individuals in that generation, while $\dot{Z}_n$ is the population size at generation $n$ of a tree where we select one spine uniformly at random from the particles alive in that generation.  Then, a way to undo this size-biased and obtain $\{Z_n\mid Z_n>0\}$ in term{s} of $\dot{Z}_n$ is to condition that the spine is a \textit{specific} {live} particle, for example the leftmost particle, i.e. 
\begin{equation}\label{eq:copyYn}
	Y_n \overset{(d)}{=} \{\dot{Z}_n \mid  L_n=0\}.
\end{equation}
Indeed,  we use Bayes' formula, the definition of size-biased distribution, and that $V^{(n)}$ is uniform in $\{1,\dots,\dot{Z}_n\}$,  to obtain 
\begin{equation*}
	\begin{split}
		\mathbb{P}\big[\dot{Z}_n = k \mid L_n= 0\big]
		&=	 \frac{	\mathbb{P}\big[L_n=0 \mid \dot{Z}_n = k\big]\mathbb{P}\big[ \dot{Z}_n =k\big]}{\sum_{j=1}^{\infty}\mathbb{P}[L_n=0 \mid  \dot{Z}_n =j] \mathbb{P}[\dot{Z}_n = j]}\\ 
		& =\frac{	k^{-1} \ k\mathbb{P}\big[ Z_n =k\big] (\mu_n)^{-1}}{\sum_{j=1}^{\infty}	j^{-1}  j \mathbb{P}\big[ Z_n =k\big] (\mu_n)^{-1}} \\
		&= \frac{\mathbb{P}\big[ Z_n =k \big]}{ \mathbb{P}[Z_n>0]}= \mathbb{P}\left[Y_n=k\right],
	\end{split}
\end{equation*}
for every $k\geq 1$. Recall the definitions of $R_n, L_n,  R_{n,j}$ and $ L_{n,j}$ from Section \ref{sec:sizebiasdist}. For each $j\leq n$ define the event $A_{n,j}:=\{L_{n,j}=0\}$.  By the branching property, the events $A_{n,1}, \dots, A_{n,n}$ are independent and $$\{L_n=0\}=\underset{r=1}{\overset{n}{\bigcap}}\ A_{n,r}, \qquad \qquad n\geq 1.$$
Moreover, by \eqref{eq: Lnj, Rnj} and the independence of the couples $\left\{(L_{n,j}, R_{n,j}): 1\leq j\leq n\right\}$,  we have
\begin{equation}\label{eq: Z in terms Rn,j}
	\{\dot{Z}_n \mid  L_n=0\}=\left\{1+\sum_{j=1}^{n} R_{n,j} ~ \Big| ~ L_n=0\right\}\overset{(d)}{=}1+\sum_{j=1}^{n} \left\{R_{n,j} \mid  A_{n,j}\right\}.
\end{equation}
We want to give a better formulation of  $\left\{R_{n,j} \mid  A_{n,j}\right\}$.  Denote by $A_{n,j}^c$ the complement of $A_{n,j}$. Observe that for every $k\geq 1$
\begin{equation*}
	\mathbb{P}\left[R_{n,j}=k\mid A_{n,j}\right]=\frac{ \mathbb{P}\left[R_{n,j}=k, A_{n,j}\right]}{ \mathbb{P}\left[ A_{n,j}\right]}=\mathbb{P}\left[R_{n,j}=k, A_{n,j}\right]+\frac{\mathbb{P}\left[R_{n,j}=k, A_{n,j}\right] }{ \mathbb{P}\left[ A_{n,j}\right]}\mathbb{P}\left[ A_{n,j}^c\right].
\end{equation*}
Consider a sequence $\{\widetilde{R}_{n,j}: 1\geq j\geq n\}$ of random variables independent of the size-biased tree $\dot{\textbf{T}}$ such that $\widetilde{R}_{n,j}:\overset{(d)}{=} \left\{R_{n,j} \mid  A_{n,j}\right\}$ for each $1\leq j\leq n$. Then, from the previous computation, we have 
$$\left\{R_{n,j} \mid  A_{n,j}\right\}\overset{(d)}{=}R_{n,j} \Indi{A_{n,j}}+\widetilde{R}_{n,j} \Indi{A_{n,j}^c}, \qquad 1\leq j\leq n.$$

The above analysis, together with \eqref{eq: bound Y_n},  \eqref{eq:equiliY}, \eqref{eq:copyYn}  and \eqref{eq: Z in terms Rn,j} imply 
\begin{equation}\label{eq:meanYYe}
	d_W\left(\frac{Y_n}{b_n},\mathbf{e}\right) 
	\leq \frac{2}{b_n}\left(\mathbb{E}\left[U\right]
	+ \sum_{j=1}^{n} \mathbb{E}\left[\widetilde{R}_{n,j} \mathds{1}_{A_{n,j}^c} + R_{n,j}\mathds{1}_{A_{n,j}^c} \right]\right).
\end{equation}
In the next subsections, we are going to analyse  the previous bound in order to give the precise rate of decay in Theorems \ref{theo:speedYaglom} and  \ref{theo:rnestimation}.  

\subsection{Proof of Theorem \ref{theo:speedYaglom}}\label{sec:proofs1}
First, let us introduce some notation.  Given an environment $Q=\{q_n: n\ge 1\}$ and $m\geq 0$, we define the shifted environment as $Q_m:=\{q_{m+n}: n\ge 1\}$. 
Let $Z=\{Z_n: n\geq 0\}$ be  a Galton-Watson process in {a} environment $Q$. The branching property of $Z$, implies that for every $m\geq 0$, 
\begin{equation}\label{eq:branchingproperty}
	\{Z_{m+n}, n\geq 0 \mid Z_m=1\} \mbox{ is a Galton-Watson process in an environment } Q_m.
\end{equation}
In some parts,  we need to \textit{keep track of the environment}. In these occasions, we are going to denote a Galton-Watson process in an environment $Q$ by $\{Z_n^Q: n\geq 0\}$.  Observe  that 
\begin{equation*}
	\mathbb{E}\left[s^{Z_n^{Q_m}}\right] = f_{m,n}(s), \qquad 0 \leq s \leq 1.
\end{equation*}
By taking derivatives with respect to the variable $s$,  evaluating the resulting function at $s=1$ and using \cite[equation (17)]{cardona2021yaglom} with $s=1$ we obtain
\begin{equation}\label{eq:media_Zn}
	\mathbb{E}\big[Z_{n}^{Q_m}\big]=\frac{\mu_{m+n}}{\mu_m} \qquad \mbox{and} \qquad \frac{\mathbb{E}\big[Z_n^{Q_m}(Z_n^{Q_m}-1)\big]}{\mathbb{E}\big[Z_n^{Q_m}\big]^2}=\mu_{m}\left(\sum_{k=m}^{m+n-1}\frac{\nu_{k+1}}{\mu_{k}} \right).
\end{equation}

We recall that in the construction of the size biased tree $\dot{\textbf{T}}$, the marked particle in generation $i$, gives birth to particles in generation $i+1$ according to $\dot{q}_{i+1}$.  Uniformly over the progeny, we select one of the individuals and classify it as a marked particle. The remaining members of the offspring remain unmarked. Therefore, the marked particle gives birth to $k$  unmarked particles with probability $\dot{q}_{i+1}[k+1]$. This new distribution will be denoted by $[\dot{q}_{i+1}-1]$, i.e. for all $k\geq 0$,  $[\dot{q}_{i+1}-1][k]=\dot{q}_{i+1}[k+1]$.

The proof is divided into five steps. In the first one and second one, we find bounds for $\mathbb{E}\left[\widetilde{R}_{n,j} \mathds{1}_{A_{n,j}^c} \right]$ and $\mathbb{E}\left[ R_{n,j} \mathds{1}_{A_{n,j}^c} \right]$ in  terms of $\mathbb{P}[A_{n,j}^c]$ and $\mathbb{P}\big[Z_{n-j}^{Q_j}>0\big]$. In the third and four steps, we find a bound  for these probabilities. In the last one we combine all our findings for obtaining the desired result. \\

\noindent \textbf{Step I:}
Our first goal is to show that for each ~$1\leq j\leq n$, we have 
\begin{equation}\label{lem:lemRnjprime}
	\mathbb{E}\Big[\widetilde{R}_{n,j}\mathds{1}_{A^c_{n,j}}\Big] \leq \mu_n \frac{\nu_j}{\mu_{j-1}}\mathbb{P}[A_{n,j}^c].
\end{equation}
By definition, $\widetilde{R}_{n,j}$ and $A_{n,j}$ are independent and $\widetilde{R}_{n,j}\overset{(d)}{=} \left\{R_{n,j} \mid  A_{n,j}\right\}=\left\{\dot{Z}_{n,j} \mid  A_{n,j}\right\}$. Then,
\begin{equation*}
	\begin{split}
		\mathbb{E}\Big[\widetilde{R}_{n,j}\mathds{1}_{A^c_{n,j}}\Big]
		=\frac{\mathbb{E}\big[\dot{Z}_{n,j}\mathds{1}_{A_{n,j}}\big]}{\mathbb{P}[A_{n,j}]}\mathbb{P}[A_{n,j}^c].
	\end{split}	
\end{equation*}
Since $\dot{Z}_{n,j}=R_{n,j}+L_{n,j}$ and $A_{n,j}=\{L_{n,j}=0\}$, the variables ~$\mathds{1}_{A_{n,j}}$ and ~$\dot{Z}_{n,j}$ are negatively correlated, which yields
$$ \mathbb{E}\big[\dot{Z}_{n,j}\mathds{1}_{A_{n,j}}\big] \leq  \mathbb{E}\big[\dot{Z}_{n,j}\big]\mathbb{P}[A_{n,j}].$$
It follows that
\begin{equation*}
	\mathbb{E}\Big[\widetilde{R}_{n,j}\mathds{1}_{A^c_{n,j}}\Big] = \frac{\mathbb{E}\big[\dot{Z}_{n,j}\mathds{1}_{A_{n,j}}\big]}{\mathbb{P}[A_{n,j}]}\mathbb{P}[A_{n,j}^c] \leq  \mathbb{E}\big[\dot{Z}_{n,j}\big]\mathbb{P}(A_{n,j}^c).
\end{equation*}
It remains to  find the upper estimate for the expectation of ~$\dot{Z}_{n,j}$. Now, we need to keep track on the environment and we are going to denote $\{Z_n^Q: n\geq 0\}$ a Galton-Watson process in an environment $Q$. Since  $\dot{Z}_{n,j}$ the number of particles at generation $n$  in $\dot{\textbf{T}}$ that are descendants of the siblings of  $V^{(j)}$ but not $V^{(j)}$ itself, we have that 
\begin{equation*}
	\begin{split}
		\mathbb{E}\big[\dot{Z}_{n,j}^Q\big] &= \mathbb{E}\Big[\dot{Z}_n ^Q\ \Big|\Big. \  \dot{Z}_{j-1}^Q=1\Big] -  \mathbb{E}\Big[\dot{Z}_n ^Q\ \Big|\Big. \  \dot{Z}_{j}^Q=1\Big]\\
		&= \mathbb{E}\Big[\dot{Z}_{n-(j-1)}^{Q_{j-1}}-1\Big] - \mathbb{E}\Big[\dot{Z}_{n-j}^{Q_j}-1\Big] \\ &=\frac{\mathbb{E}\Big[Z_{n-(j-1)}^{Q_{j-1}}\big(Z_{n-(j-1)}^{Q_{j-1}}-1\big)\Big]}{\mathbb{E}\Big[Z_{n-(j-1)}^{Q_{j-1}}\Big]} - \frac{\mathbb{E}\Big[Z_{n-j}^{Q_{j}}\big(Z_{n-j}^{Q_{j}}-1\big)\Big]}{\mathbb{E}\Big[Z_{n-j}^{Q_{j}}\Big]},
	\end{split}
\end{equation*}
where we have used the branching property \eqref{eq:branchingproperty} and the size-biased distribution in the last two lines.  We use  \eqref{eq:media_Zn} for the environments $Q_{j-1}$ and $Q_{j}$ to conclude that
\begin{equation*}
	\begin{split}
		\mathbb{E}\big[\dot{Z}_{n,j}^Q\big]  &=  \mu_n \sum_{k=j-1}^{n-1}\frac{\nu_{k+1}}{\mu_{k}} -\mu_n \sum_{k=j}^{n-1}\frac{\nu_{k+1}}{\mu_k} = \mu_n \frac{\nu_j}{\mu_{j-1}}.
	\end{split}
\end{equation*}
This completes the proof of \eqref{lem:lemRnjprime}.\\

\noindent \textbf{Step II:}
Our second goal is to show that for each ~$1\leq j\leq n$, we have 
\begin{equation}\label{eq:StepII}
	\mathbb{E}\Big[R_{n,j}\mathds{1}_{A_{n,j}^c}\Big] \leq \frac{\mu_n}{\mu_j}\left(\frac{f_j'''(1)+ f_j''(1)}{f_j'(1)}\right)\mathbb{P}\big[Z_{n-j}^{Q_j}>0\big].
\end{equation}

Let $\textbf{V}=\{V^{(k)}: k=0,1,\ldots\}$ be the spine associated {with} $\dot{\textbf{T}}$.
In the sequel, for each ~$j\geq 1$, we denote  $S_j$  the random number of siblings of  $V^{(j)}$. By  construction of $\dot{\textbf{T}}$, we know that $S_j$ is distributed as $[\dot{q}_j-1]$. Moreover, there exists $I_j$ uniformly distributed on $\{1, \dots, S_j+1\}$ such that $V^{(j)}=V^{(j-1)}I_j$. 
We observe that, conditional on $S_j$ and $I_j$, the random variables $R_{n,j}$ and $L_{n,j}$ are independent. Hence,

\begin{equation*}
	\mathbb{E}\Big[R_{n,j}\mathds{1}_{A_{n,j}^c}\Big] = \mathbb{E}\Big[\mathbb{E}\big[R_{n,j}\mathds{1}_{A_{n,j}^c} \big|\big. S_j, I_j\big]\Big] = \mathbb{E}\Big[ \mathbb{E}\big[R_{n,j}\mid S_j, I_j\big] \mathbb{E}\big[\mathds{1}_{A_{n,j}^c} \mid S_j, I_j\big]\Big].
\end{equation*}
Further, by construction of $\dot{\textbf{T}}$ and the branching property \eqref{eq:branchingproperty}, we obtain
\begin{equation*}
	\mathbb{E}\Big[R_{n,j}\mid S_j, I_j\Big] \leq S_j \mathbb{E}\big[Z_{n}^Q\mid Z_j^Q=1 \big]=S_j \mathbb{E}\big[Z_{n-j}^{Q_j}\big],
\end{equation*}
and 
\begin{equation}\label{eq: A_njc condioned S}
	\mathbb{E}\Big[\mathds{1}_{A_{n,j}^c}\mid S_j, I_j\Big] =
	\mathbb{P}\big[L_{n,j}\not= 0 \mid  S_j, I_j\big]\leq
	S_j\mathbb{P}\big[Z_{n}^Q>0\mid Z_j^Q=1 \big]= S_j \mathbb{P}\big[Z_{n-j}^{Q_j}>0\big].
\end{equation}
By putting both terms in the previous expectation, we have 
\begin{equation*}\label{eq:RA}
	\mathbb{E}\Big[R_{n,j}\mathds{1}_{A_{n,j}^c}\Big]  \leq \mathbb{E}\Big[ S_j^2 \Big]\mathbb{E}\big[Z_{n-j}^{Q_j}\big] \mathbb{P}\big[Z_{n-j}^{Q_j}>0\big].
\end{equation*}
Now,  we use  that $S_j$ is distributed as $[\dot{q}_j-1]$ and the size-biased distribution of $Z^{Q_j}_1$, to show that 
\begin{equation*}
	\begin{split}
		\mathbb{E}\big[S_j^2\big] &=  \mathbb{E}\big[(\dot{Z}_1^{Q_{j-1}}-1)^2\big] =\frac{ \mathbb{P}\big[(Z_1^{Q_{j-1}}-1)^2 Z_1^{Q_{j-1}}\big] }{\mathbb{P}\big[Z_1^{Q_{j-1}}\big]}\\ &=  \frac{ \mathbb{P}\big[(Z_1^{Q_{j-1}}-2)(Z_1^{Q_{j-1}}-1) Z_1^{Q_{j-1}}\big] + \mathbb{P}\big[(Z_1^{Q_{j-1}}-1) Z_1^{Q_{j-1}}\big]}{\mathbb{P}\big[Z_1^{Q_{j-1}}\big]}= \frac{f_j'''(1)+ f_j''(1)}{f_j'(1)}.
	\end{split}
\end{equation*}
Therefore, by using \eqref{eq:media_Zn} with the environment $Q_j$, the claim holds
\begin{equation*}
	\begin{split}
		\mathbb{E}\Big[R_{n,j} \mathds{1}_{A_{n,j}^c}\Big] 
		& \leq \mathbb{E}\Big[ S_j^2 \Big]\mathbb{E}\big[Z_{n-j}^{Q_j}\big] 
		\mathbb{P}\big[Z_{n-j}^{Q_j}>0\big]
		= \left(\frac{f_j'''(1)+ f_j''(1)}{f_j'(1)}\right)\frac{\mu_n}{\mu_j}\mathbb{P}\big[Z_{n-j}^{Q_j}>0\big].
	\end{split}
\end{equation*}\\

\noindent \textbf{Step III:} We will show that for each  ~$1\leq  j \leq  n$,
\begin{align}\label{lem:Anj}
	\mathbb{P}[A_{n,j}^c]
	\leq \frac{f_j''(1)}{f_j'(1)}\mathbb{P}\big[Z_{n-j}^{Q_j}>0\big].
\end{align}
First,  we use \eqref{eq: A_njc condioned S} to get that for each  ~$1\leq  j \leq  n$,
\begin{equation*}
	\mathbb{P}[A_{n,j}^c]\leq \mathbb{E}\big[S_j \big]\mathbb{P}\big[Z_{n-j}^{Q_j}>0\big].
\end{equation*}
The proof is completed by  using that $S_j$ is distributed as $[\dot{q}_j-1]$, and  the size-biased distribution of $Z_1^{Q_j}$ to obtain  
\begin{equation*}
	\begin{split}
		\mathbb{E}\big[S_j\big] &=  \mathbb{E}\big[\dot{Z}_1^{Q_{j-1}}-1\big] =\frac{ \mathbb{P}\big[Z_1^{Q_{j-1}}(Z_1^{Q_{j-1}}-1) \big] }{\mathbb{P}\big[Z_1^{Q_{j-1}}\big]}= \frac{f_j''(1)}{f_j'(1)}.
	\end{split}
\end{equation*}

\noindent \textbf{Step IV:} We will show that there exists a positive constant $C$ such that 
\begin{align}\label{lem:  Z_nj>0}
	\mathbb{P}\big[Z_{0}^{Q_n}>0\big] =1\qquad \mbox{and } \qquad \mathbb{P}\big[Z_{n-j}^{Q_j}>0\big] \leq    \frac{C}{\mu_j(\rho_{0,n}-\rho_{0,j})}, \quad \mbox{for } 0\leq j<n.
\end{align}	
We are going  to rely on the use of the shape functions defined in Subsection \ref{sec: shape}. By definition of the shape function $\varphi_{j,n}$  of $f_{j,n}$  evaluated at   $s=0$, we have that $\mathbb{P}\big[Z_{0}^{Q_n}>0\big] =1$, and for each $0\leq j<n$ 
\begin{equation*}
	\mathbb{P}\big[Z_{n-j}^{Q_j}>0\big]
	= 1-f_{j,n}(0)
	= \left(\frac{\mu_j}{\mu_n}+ \varphi_{j,n}(0) \right)^{-1}=\left(\frac{\mu_j}{\mu_n}+ \mu_j \sum_{l=j+1}^{n}\frac{\varphi_l(f_{l,n}(0))}{\mu_{l-1}}\right)^{-1}.
\end{equation*}
Now, we use the bound  \eqref{eq:bound varphi}  together with $\varphi_l(1)=\nu_l/2$ to get 

\begin{equation*}
	\mathbb{P}\big[Z_{n-j}^{Q_j}>0\big] \leq  \left(\mu_j \sum_{l=j+1}^{n}\frac{\varphi_l(f_{l,n}(0))}{\mu_{l-1}}\right)^{-1} \leq \left(\mu_j \sum_{l=j+1}^{n}\frac{C}{2}\frac{\nu_l}{\mu_{l-1}}\right)^{-1} =   \frac{C'}{\mu_j(\rho_{0,n}-\rho_{0,j})}.
\end{equation*}
Then, we obtained \eqref{lem:  Z_nj>0}.\\

\noindent \textbf{Step V:} 
Now, we put together all the ingredients to prove Theorem \ref{theo:speedYaglom} under Assumption \eqref{cond:thirdmoment}. The previous  assumption  implies \eqref{cond:Kersting}.  According with \cite[equation (3.2)]{kersting2021genealogical}, condition \eqref{cond:Kersting} implies that there exists  a constant $C_2>0$ such that 
\begin{equation}\label{eq:f''}
	f_j''(1) \leq C_2 (1+f_j'(1)) f_j'(1), \qquad j\geq 1.
\end{equation}
Recall that $b_n=\mathbb{E}[Z_n^Q\mid Z_n^Q>0]=\mu_n/\mathbb{P}[Z_n^Q>0].$ Then, we use \eqref{cond:thirdmoment} together with  \eqref{eq:meanYYe}, \eqref{lem:lemRnjprime},   \eqref{eq:StepII}, \eqref{lem:Anj}, \eqref{eq:f''} and the definition of $\nu_j$ to  obtain
\begin{equation*}
	\begin{split}
		d_W\left(\frac{Y_n}{b_n},\mathbf{e}\right) 
		&\leq \frac{2\mathbb{P}[Z_n^Q>0]}{\mu_n}\left(1
		+ \sum_{j=1}^{n} \mathbb{E}\left[\widetilde{R}_{n,j} \mathds{1}_{A_{n,j}^c} + R_{n,j}\mathds{1}_{A_{n,j}^c} \right]\right)\\
		&\leq \frac{2\mathbb{P}[Z_n^Q>0]}{\mu_n}\left(1
		+ \sum_{j=1}^{n} \mu_n\left( \frac{\nu_j}{\mu_{j-1}}\frac{f_j''(1)}{f_j'(1)}+
		\frac{f_j'''(1)+ f_j''(1)}{\mu_jf_j'(1)}\right)\mathbb{P}\big[Z_{n-j}^{Q_j}>0\big]\right)\\
		&\leq \frac{2\mathbb{P}[Z_n^Q>0]}{\mu_n}\left(1
		+ C \mu_n \sum_{j=1}^{n} \frac{\nu_j}{\mu_{j-1}} (1+f_j'(1)) \mathbb{P}\big[Z_{n-j}^{Q_j}>0\big]\right).
	\end{split}
\end{equation*}
Therefore, by using \eqref{lem:  Z_nj>0}, we have  
\begin{equation*}
	\begin{split}
		d_W\left(\frac{Y_n}{b_n},\mathbf{e}\right) 
		\leq& 
		\frac{C}{\mu_n\rho_{0,n}}
		+\frac{C}{\rho_{0,n}} \left(\sum_{j=1}^{n-1} 
		\frac{\nu_j}{\mu_{j-1}} \frac{(1+f_j'(1))}{\mu_j}
		\frac{1}{(\rho_{0,n}-\rho_{0,j})}
		+\frac{\nu_n}{\mu_{n-1}}(1+ f_n'(1))\right).
	\end{split}
\end{equation*}
Finally, if we define $r_n$ as in \eqref{eq:rn}  we get that Theorem \ref{theo:speedYaglom} is true.

\subsection{Proof of Theorem \ref{theo:rnestimation}}\label{sec:proofs2}
By Theorem \ref{theo:speedYaglom}, it suffices to bound the term $r_{n}$ given by \eqref{eq:rn}. Define 
\begin{align*}
	l_{n}
	&:=\sum_{j=1}^{n-1} 
	\frac{\nu_j}{\mu_{j-1}} \left(\frac{1}{\mu_j}+\frac{1}{\mu_{j-1}}\right)
	\frac{1}{(\rho_{0,n}-\rho_{0,j})}.
\end{align*}
Using the identity $\mu_{j}=\mu_{j-1}f_j^{\prime}(1)$, we deduce that
$$r_n=l_n+\frac{\nu_n}{\mu_{n-1}}(1+ f_n'(1)),$$
so the problem is reduced to estimating $l_n$. In order to do so, introduce the following quantities 
$$ h_k = \frac{1}{\mu_{k}}+\frac{1}{\mu_{k-1}}, \qquad g_1^{(n)}=0, \quad \text{and} \quad g_k^{(n)}= \sum_{j=1}^{k-1}\frac{\nu_j}{\mu_{j-1}} \frac{1}{(\rho_{0,n}- \rho_{0,j})}, \qquad \mbox{for } \ 2\le k\le n.$$
Then, 
\begin{align*}
	l_{n}
	&=\sum_{j=1}^{n-1}h_j(g_{j+1}^{(n)}-g_j^{(n)}).
\end{align*}
Applying  summation by parts, we get
\begin{equation*}
	\begin{split}
		l_n&=  h_{n-1} g_n^{(n)}  
		+ \sum_{k=2}^{n-1} \left(h_{k-1}-h_{k}\right)g_k^{(n)} = \left(\frac{1}{\mu_{n-1}}+\frac{1}{\mu_{n-2}}\right) g_n^{(n)} + 
		\sum_{k=2}^{n-1} \left(\frac{1}{\mu_{k-2}}- \frac{1}{\mu_k}\right)g_k^{(n)}.
	\end{split}
\end{equation*}

Recall that ~$\{\rho_{0,n}:   n\geq 0\}$ is an increasing sequence. Therefore, $g_k^{(n)}\leq g_k^{(k)},$ for each $2\le k\le n$, which gives
\begin{equation*}
	l_n  \leq f_{n-1}^{\prime}(1)(1+f_{n}^{\prime}(1))\frac{g_n^{(n)}}{\mu_{n}}  + \sum_{k=2}^{n-1}g_k^{(k)} \left|\frac{1}{\mu_{k-2}} - \frac{1}{\mu_{k}}\right|.
\end{equation*}
In order to bound the right-hand side, we will show that there is a constant $C>0$ such that for every $2\leq m\leq n$, 
\begin{align}\label{eq:logmurho}
	g_m^{(m)}
	=\sum_{j=1}^{m-1}\frac{\nu_j}{\mu_{j-1}} \frac{1}{(\rho_{0,m}- \rho_{0,j})}
	&\leq C (1+\mathfrak{M}_{n})^2 \left(\log(\mu_m \rho_{0,m})+\log\left(f_n'(1)\right)\right).
\end{align}
In this aim,  consider the following partition of  $[0,1]$,
$$P^{(n)}:=\{0=t_0^{(n)} < t_1^{(n)} < \cdots < t_n^{(n)}=1\} \qquad \text{where}\quad t_j^{(n)}:=\frac{\rho_{0,j}}{\rho_{0,n}}, \quad \text{for}\quad  j\in \{0, \dots, n\},$$
recalling that by definition $\rho_{0,0}=0$. 
Observe that 
\begin{equation}\label{eq:gnexpression}
	g_n^{(n)} 
	= \sum_{j=1}^{n-1} \frac{t_j^{(n)} - t_{j-1}^{(n)}}{1-t_j^{(n)}}   , \qquad n\geq 2,
\end{equation}
and $g_n^{(n)}$ can be regarded as the Riemman approximation of $\int_{0}^{1}  (1-x)^{-1}\mathrm{d}x$. Additionally, appealing to \eqref{eq:f''}, we get for all $j=1,\dots, n-2$, 
\begin{align*}
	t_j^{(n)} - t_{j-1}^{(n)}
	&=\frac{1}{\rho_{0,n}}\frac{\nu_{j}}{\mu_{j-1}}
	=\frac{1}{\rho_{0,n}}\frac{\nu_{j+1}}{\mu_{j}}\frac{\mu_{j}\nu_j}{\mu_{j-1}\nu_{j+1}}
	=(t_{j+1}^{(n)} - t_{j}^{(n)})f^{\prime}_j(1)\frac{\nu_j}{\nu_{j+1}}\\
	&=(t_{j+1}^{(n)} - t_{j}^{(n)})f^{\prime}_{j+1}(1)\frac{f_{j+1}'(1)}{f_j'(1)}\frac{f_{j}''(1)}{f_{j+1}''(1)} \leq C(t_{j+1}^{(n)} - t_{j}^{(n)})f^{'}_{j+1}(1)^2\frac{1+f_j'(1)}{f_{j+1}''(1)} \\ & \leq 
	C(1+\mathfrak{M}_{n})^3(t_{j+1}^{(n)} - t_{j}^{(n)}),
\end{align*}
where in the last inequality we have used that the sequence $\{f_n''(1): n\ge 1\}$ is bounded away from zero by assumption. 
%	To see why this is true, note that 
%\begin{equation*}
%	f_n'(1) = \sum_{k=1}^{\infty} k q_n[k] \ge \sum_{k=2}^{\infty}q_n[k].
%\end{equation*}
%Now by condition \eqref{eq:Hipotqn}, the sum of the coefficients $\{q_n[k]: k\ge 2\}$ stays away from zero, since 
%\begin{equation*}
%	 1>	\sup_{n \ge1 } \big(q_n[0] + q_n[1]\big)= 1 - \inf_{n\ge1 } \sum_{k=2}^{\infty} q_n[k], 
%\end{equation*}
%which implies $ \inf_{n\ge1 } \sum_{k=2}^{\infty} q_n[k]>0$.

We thus conclude that  
\begin{align*}
	\frac{t_j^{(n)} - t_{j-1}^{(n)}}{1-t_j^{(n)}} 
	&\leq C(1+\mathfrak{M}_{n})^3 \frac{t_{j+1}^{(n)} - t_{j}^{(n)}}{1-t_j^{(n)}}, \qquad \text{for all}\quad j=1,\dots, n-2.
\end{align*}
Moreover, by first separating the term $j=n-1$ in the right-hand side of \eqref{eq:gnexpression}, we can obtain, after a suitable change of indices, 
\begin{align*}
	g_n^{(n)} &\leq  C(1+\mathfrak{M}_{n})^3  \sum_{j=2}^{n-1} \frac{t_j^{(n)} - t_{j-1}^{(n)}}{1-t_{j-1}^{(n)}} + f_{n-1}'(1) \frac{f_n'(1)^2}{f_{n-1}'(1)^2} \frac{f_{n-1}''(1)}{f_n''(1)}
	\\ &\leq C(1+\mathfrak{M}_{n})^3 \left(\sum_{j=2}^{n-1} \frac{t_j^{(n)} - t_{j-1}^{(n)}}{1-t_{j-1}^{(n)}}+ 1\right),
\end{align*}
where in the last inequality we have again used \eqref{eq:f''} and  that the sequence $\{f_n''(1): n\ge 1\}$ is bounded away from zero. By the monotonicity of the function $x\mapsto (1-x)^{-1}$, we deduce 
\begin{eqnarray*}
	\sum_{j=2}^{n-1} \frac{t_j^{(n)} - t_{j-1}^{(n)}}{1-t_{j-1}^{(n)}}
	&\leq&  \int_{0}^{t_{n-1}^{(n)}}  \frac{\mathrm{d}x}{1-x} =  -\log(1-t_{n-1}^{(n)})  = -  \log\left(\frac{\nu_{n}/ \mu_{n-1}}{\rho_{0,n}}\right) \\ &=&  \log(\rho_{0,n} \mu_n ) +\log\left(\frac{f_n'(1)}{f_n''(1)}\right) \leq \log(\rho_{0,n} \mu_n ) +\log\left(f_n'(1)\right).
\end{eqnarray*}
%\nat{where in the last inequality we have used that the sequence $\{f_n''(1): n\ge 1\}$ is also bounded away from zero by condition \eqref{eq:Hipotqn}.}
Therefore, \eqref{eq:logmurho} holds. Finally, we are going to apply \eqref{eq:logmurho} to prove  Theorem \ref{theo:rnestimation}. 
In this case,
\begin{align*}
	r_n& =l_n+\frac{\nu_n}{\mu_{n-1}}(1+ f_n'(1))\\
	&\leq f_{n-1}^{\prime}(1)(1+f_{n}^{\prime}(1)) \frac{g_n^{(n)}}{\mu_n} + \sum_{k=2}^{n-1}g_k^{(k)} \left|\frac{1}{\mu_{k-2}} - \frac{1}{\mu_{k}}\right|+\frac{\nu_n}{\mu_{n-1}}(1+ f_n'(1))\\
	& \leq  \frac{C{(1+\mathfrak{M}_{n})^5}}{\mu_{n}}  \left(\log(\rho_{0,n} \mu_n ) +\log\left(f_n'(1)\right)\right) +  C(1+\mathfrak{M}_{n})^2 s_n +\frac{1}{\mu_{n}}(1+\mathfrak{M}_{n})^2,
\end{align*}
where $s_n$ is defined as in \eqref{eq:sn}.  This concludes the proof.

\section*{Acknowledgments}
We would like to thank an anonymous referee and the Associate Editor, who made a number of very helpful suggestions. We thank A. Röllin and E. Peköz for discussions about their results in \cite{pekoz2011}. We thank Jonathan Badin and Ya. M. Khusanbaev for pointing out an issue with Theorem 1.2. that we had in a previous version of this article.

\bibliographystyle{abbrv}
\bibliography{references2}

\end{document}